\newtheorem{thm}{Theorem}
\newtheorem{prop}[thm]{Proposition}
\newtheorem{lem}[thm]{Lemma}
\newtheorem{remark}[thm]{Remark}
\begin{document}
\title{On the genus defect of positive braid knots}
\author{Livio Liechti}
\address{Department of Mathematics, University of Fribourg, Chemin du Mus\'ee, 1700 Fribourg, Switzerland}
\email{livio.liechti@unifr.ch}

\begin{abstract} 
We show that the difference between the Seifert genus and the topological~$4$-genus of a prime positive braid knot is bounded from below 
by an affine function of the minimal number of strands among positive braid representatives of the knot.
We deduce that among prime positive braid knots, the property of having such a genus difference less than any fixed constant is characterised by finitely many forbidden surface minors.  
\end{abstract}
\maketitle

\section{Introduction}

The discrepancy between the smooth and the topological category in dimension four distinctly manifests itself in the behaviour of the smooth and the topological~$4$-genus of positive braid knots. 

Let~$g_4^\mathrm{top}$,~$g_4^\mathrm{smooth}$ and~$g$ be the topological~$4$-genus, the smooth~$4$-genus and the Seifert genus, respectively. Then, we have 
$$\frac{|\sigma(K)|}{2}\le g_4^\mathrm{top}(K)\le g_4^\mathrm{smooth}(K) \le g(K)$$
for any knot~$K$, where~$\sigma$ is the signature invariant. The first inequality is due to Kauffman and Taylor~\cite{KT}, and the others follow quickly from the definitions, which we will give shortly.
We say the knot~$K$ has \emph{maximal} signature, topological~$4$-genus or smooth~$4$-genus, if the above inequality between the respective invariant and the Seifert genus~$g$ is an equality.  

Positive braid knots have maximal smooth~$4$-genus by the resolution of the Thom conjecture due to Kronheimer and Mrowka~\cite{KM}, and Rudolph's extension to strongly quasipositive knots~\cite{Ru2}. 
In strong contrast, a positive braid knot that fails to have maximal signature also fails to have maximal topological~$4$-genus by a result of the author~\cite{L2}. 
Using Baader's classification of positive braid knots with maximal signature~\cite{Ba}, 
this implies that a positive braid knot of positive braid index greater than or equal to four never has maximal topological~$4$-genus.

The aim of this article is to study the quantity~$g-g_4^\mathrm{top}$, which we call the \emph{genus defect}, 
in the context of positive braid knots, where it also equals~$g_4^\mathrm{smooth}-g_4^\mathrm{top}$.
We show that the genus defect of a positive braid knot is bounded from below by an affine function of the positive braid index (Theorem~\ref{linearity_thm}),
and prove the existence of a characterisation by finitely many forbidden surface minors for the property to have genus defect smaller than or equal to~$c$, where~$c$ is any fixed constant (Theorem~\ref{forbiddenminorchar}).

\subsection{The genus defect}
Let~$L$ be an oriented link in the~$3$-sphere. The \emph{Seifert genus}~$g(L)$ of the link~$L$ is the minimal genus among connected compact oriented surfaces in the~$3$-sphere having~$L$ as boundary. 
The \emph{topological~$4$-genus}~$g_4^\mathrm{top}(L)$ of the link~$L$ 
is the minimal genus among properly, topologically locally-flatly embedded connected compact oriented surfaces in the~$4$-ball having the link~$L$ in the~$3$-sphere as boundary. 
The \emph{smooth~$4$-genus}~$g_4^\mathrm{smooth}(L)$ of the link~$L$ is defined analogously, by replacing ``topologically locally-flatly" with ``smoothly". 

For large enough parameters~$p$ and~$q$, the genus defect of the torus knot~$T(p,q)$ is greater than one quarter of its Seifert genus by a result of Baader, Feller, Lewark and the author~\cite{BFLL}.
Our first result applies to a more general class of knots, but draws a weaker conclusion. It states that the genus defect of any positive braid knot
is bounded from below by an affine function of the positive braid index.
Here, a positive braid knot on~$n+1$ strands is defined to be the closure of a braid given by a positive word in the braid generators~$\sigma_1,\ \dots,\sigma_n$ (see Section~\ref{background} for a precise definition),
and the \emph{positive braid index}~$b$ is the minimal number of strands among positive braid representatives of the knot.

\begin{thm}
\label{linearity_thm}
For a prime positive braid knot~$K$ of positive braid index~$b$, we have~$$g(K) - g_4^\mathrm{top}(K) \ge \left \lfloor{\frac{b}{16}}\right \rfloor .$$
\end{thm}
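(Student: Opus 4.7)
My plan is to prove Theorem~\ref{linearity_thm} by combining a monotonicity property of the genus defect under \emph{surface minors} with a base-case estimate already available in the introduction. The object of study is the Bennequin surface $\Sigma_K$ associated to a positive braid representative of $K$, which realises the Seifert genus $g(K)$. By a surface minor I would mean a positive braid link $L'$ whose Bennequin surface embeds as a subsurface of $\Sigma_K$; combinatorially, this corresponds to deleting some generators from the positive braid word for $K$. The first lemma to establish is that the genus defect is monotone under surface minors: $g(L') - g_4^{\mathrm{top}}(L') \leq g(K) - g_4^{\mathrm{top}}(K)$. This should follow since each positive band removed from the Seifert surface can also be removed from any topologically locally-flat surface in $B^4$ cobounding $K$, with matching effects on the Euler characteristic and hence on both $g$ and $g_4^{\mathrm{top}}$. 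The second (elementary) ingredient is super-additivity: the defect of a connected or disjoint sum is at least the sum of the defects of the summands, since $g$ is additive and $g_4^{\mathrm{top}}$ is sub-additive under these operations.

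The base case is immediate from the introduction: by the author's result~\cite{L2} combined with Baader's classification~\cite{Ba}, any positive braid knot of braid index at least four has genus defect at least one. The combinatorial heart of the proof is then to show that any prime positive braid knot of braid index $b$ admits a surface minor that decomposes as a disjoint or connected sum of $\lfloor b/16 \rfloor$ positive braid sublinks, each of braid index at least four. The strategy is to partition the $b$ strands into consecutive blocks of $16$ and, within each block, isolate a width-$4$ sub-braid as a surface minor while using the remaining $12$ strands as a buffer to decouple adjacent minors from each other. Once the minor is constructed, applying monotonicity and super-additivity yields the claimed bound $\lfloor b/16\rfloor$.

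The main obstacle I foresee is this extraction step. A positive braid word could a priori be arranged so that no clean sub-block is readily available: for example if many generators cross the full width of the braid, if the braid is close to a full twist, or if it represents a cabling-like pattern. The primality hypothesis on $K$ -- which in particular prevents decompositions as connected sums realised on the braid axis -- together with careful use of normal forms for positive braids will have to be exploited to make the isolation work uniformly, and the explicit constant $16$ should emerge from the ``cost'' of isolating a single width-$4$ sub-braid (four strands for the minor itself, twelve for the buffer). The verification of the monotonicity lemma for surface minors in the topologically locally-flat category may also require some care, but I expect it to follow a standard band-surgery argument.
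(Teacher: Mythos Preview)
Your overall architecture --- partition the strands into blocks of width~$16$, find in each block a surface minor with genus defect at least one, and conclude by monotonicity and additivity --- is precisely the skeleton of the paper's proof; the paper's $\Sigma_j = \Sigma_{\{1+16j,\dots,15+16j\}}(\beta)$ are your blocks. The genuine gap is your base case. You invoke the fact that a prime positive braid \emph{knot} of positive braid index at least four has genus defect at least one, but when you restrict~$\beta$ to a block of consecutive generators the closure of the resulting sub-braid is almost never a knot: it is a positive braid \emph{link}, and for links the analogous statement is simply false. Remark~\ref{nolowerbound} exhibits prime positive braid links of arbitrarily large positive braid index with $g=g_4^{\mathrm{top}}$. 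So ``width~$\ge 4$'' alone buys you nothing; and even if you could somehow arrange each sublink to be a prime knot, you would still owe an argument that its positive braid index has not collapsed below four.

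The paper closes this gap not via the braid-index criterion for knots at all, but by a long case analysis on the linking pattern (Lemmas~\ref{einekante}--\ref{twooccurrences}, culminating in Proposition~\ref{keyprop}) which locates in every window of~$15$ consecutive generators one of three \emph{specific} surface minors~$\widetilde T$,~$\widetilde E$ or~$\widetilde X$, whose boundary links are already known from~\cite{L2} to have defect exactly~$1$. This analysis uses primality, minimality of the braid index (via Lemma~\ref{minimalitylemma}), and the hypothesis that~$\widehat\beta$ is a knot in essential ways, and is where the constant~$16$ actually comes from. Your outline correctly identifies this extraction step as the ``combinatorial heart'', but the work required is substantially different from isolating a generic width-$4$ sub-braid.
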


The proof of Theorem~\ref{linearity_thm} critically uses the fact that genus defect is inherited from surface minors (defined in Section~\ref{surfaceminorsec} below).
More precisely, we study the linking graph of positive braid knots, a concept implicitly used by Baader, Feller, Lewark and the author~\cite{Ba, BFLL, L2} and rigorously defined by Baader, Lewark and the author~\cite{BLL}. We deduce a series of lemmas to find an affinely increasing (in the positive braid index) number of certain surface minors~$\widetilde T$,~$\widetilde E$ or~$\widetilde X$ 
of the canonical Seifert surface of any prime positive braid knot. Finally, for the boundary links of the surfaces~$\widetilde T$,~$\widetilde E$ and~$\widetilde X$, 
a positive genus defect was established by the author~\cite{L2} using Freedman's disc theorem~\cite{Free}.

We note that it is essential to assume that~$K$ is a knot in Theorem~\ref{linearity_thm}. Indeed,  
for prime positive braid links, there exists no nontrivial lower bound for the genus defect~$g-g_4^\mathrm{top}$ in terms of the minimal positive braid index:
in Remark~\ref{nolowerbound}, we give examples of prime positive braid links of arbitrary positive braid index for which~$g=g_4^\mathrm{top}$. 

\subsection{Surface minors}
\label{surfaceminorsec}
The \emph{surface minor relation} is a partial order on embedded surfaces in the~$3$-sphere, where a surface~$\Sigma_1$ is a \emph{minor} of another surface~$\Sigma_2$ 
if~$\Sigma_1$ can be isotoped in the~$3$-sphere to an incompressible subsurface of~$\Sigma_2$. It was introduced by Baader and Dehornoy in the context of {Seifert surfaces} of links~\cite{BaDe}.

The surface minor relation is well-suited for the study of properties of links which are inherited from Seifert surface minors,
for example having genus defect:
by a surgery argument,~$g - g_4^\mathrm{top}\ge c$ is inherited from surface minors if we restrict ourselves to Seifert surfaces which realise the genus of the links, see, for example,~\cite{BFLL}. 
Our second result establishes the existence of a forbidden minor characterisation for the genus defect of prime positive braid knots.
We use that prime positive braid knots have a canonical genus-minimising Seifert surface (described in Section~\ref{background}) by a result of Stallings~\cite{St}.
 
\begin{thm}
\label{forbiddenminorchar}
Among prime positive braid knots, for any~$c\ge0$, having a genus defect~$g-g_4^\mathrm{top}\le c$ is characterised by finitely many forbidden surface minors of the canonical Seifert surface.
\end{thm}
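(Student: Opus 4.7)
The plan is to combine Theorem~\ref{linearity_thm} with Higman's lemma to extract a finite list of forbidden surface minors.

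The first step uses Theorem~\ref{linearity_thm} to bound the positive braid index: any prime positive braid knot~$K$ with $g(K)-g_4^{\mathrm{top}}(K)\le c$ has positive braid index at most~$N:=16c+15$. Inspecting the proof of Theorem~\ref{linearity_thm}, for braid index exceeding~$N$ the canonical Seifert surface contains as a surface minor a specific bounded-size surface of defect~$>c$; for instance, a disjoint union of~$c+1$ copies of one of~$\widetilde T$,~$\widetilde E$ or~$\widetilde X$, using that the number of such minors grows affinely with the braid index and that the defect is additive under split union. Declaring this finite collection of surfaces as forbidden minors handles all prime positive braid knots of braid index exceeding~$N$.

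For braid index at most~$N$, I would invoke Higman's lemma on the finite alphabet~$\{\sigma_1,\dots,\sigma_{N-1}\}$: the subword (subsequence) relation well-quasi-orders positive braid words on at most~$N$ strands. Geometrically, deleting a letter from a positive braid word corresponds to removing the associated band from the canonical Seifert surface of its closure, and the resulting subsurface is itself the canonical Seifert surface of a shorter positive braid, hence a fiber surface by Stallings~\cite{St} and in particular an incompressible subsurface of the original. The subword relation therefore refines to the surface minor relation, so the set of canonical Seifert surfaces of prime positive braid knots of braid index at most~$N$ is well-quasi-ordered under the surface minor relation.

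Since the property of having genus defect at least~$c+1$ is inherited from surface minors of genus-minimising Seifert surfaces and the canonical surface is genus-minimising (Stallings~\cite{St}), the subset of canonical Seifert surfaces of prime positive braid knots with defect~$>c$ is upward closed under the minor relation. Well-quasi-order then forces this subset to have only finitely many minor-minimal elements, and together with the bounded-size witnesses from the first step they form the desired finite forbidden minor list. The main obstacle, in my view, is verifying the subword-to-minor step carefully: one must ensure that band removal genuinely yields an incompressible subsurface, even when the resulting positive braid closure fails to be prime or connected, and that the construction agrees with the surface minor formalism of~\cite{BLL}. Once this geometric correspondence is established, the Higman-type well-quasi-order argument yields the finite obstruction set automatically.
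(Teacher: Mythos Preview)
Your proposal is correct and follows essentially the same approach as the paper: use Theorem~\ref{linearity_thm} (more precisely, Proposition~\ref{keyprop}) to reduce to positive braid index at most~$16c+15$, and then apply Higman's Lemma on positive braid words in the finite alphabet~$\{\sigma_1,\dots,\sigma_{16c+15}\}$, translating forbidden subwords into forbidden surface minors via the subword-to-canonical-Seifert-surface correspondence. The paper packages your large-braid-index step into a short lemma (Lemma~\ref{16c+32}) rather than listing the disjoint unions of~$\widetilde T,\widetilde E,\widetilde X$ explicitly, but the underlying argument is the same, and the subword-to-minor step you flag as the main concern is treated as routine in the paper.
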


Having maximal topological~$4$-genus has previously been characterised for prime positive braid knots by the author~\cite{L2}. 
More precisely, this has been done by giving four explicit forbidden surface minors~$\widetilde T$,~$\widetilde E$,~$\widetilde X$ and~$\widetilde Y$ of the canonical Seifert surface. 
From this perspective, Theorem~\ref{forbiddenminorchar} is a non-explicit generalisation of this result. 

In order to prove Theorem~\ref{forbiddenminorchar}, we use the theory of well-quasi-orders.
Higman's Lemma states that finite words in a finite alphabet with the subword partial order are \emph{well-quasi-ordered}, that is, there exists no infinite antichain and no infinite descending chain~\cite{Higman}.
In this context, a \emph{subword} of a word~$w$ is obtained by deleting any number of letters anywhere in~$w$. 
For well-quasi-ordered sets, properties that are passed on to \emph{minors}, that is, smaller elements with respect to the partial order, are of special interest: 
they can be characterised by finitely many forbidden minors. 
Indeed, if infinitely many forbidden minors were necessary to characterise such a property, then they would constitute an infinite antichain.
Baader and Dehornoy noted that restricting to the positive braid monoid on a certain number of strands, 
Higman's Lemma states that the subword partial order is a well-quasi-order, and
it directly follows that their canonical Seifert surfaces are well-quasi-ordered by the surface minor relation~\cite{BaDe}.
However, the subword partial order on the positive braid monoid is not a well-quasi-order if we do not restrict to a fixed number of strands: 
for example, already~$\sigma_1,\sigma_2,\sigma_3 \dots$ is an infinite antichain.
The key input for the proof of Theorem~\ref{forbiddenminorchar} is a reduction to the case of restricted braid index, so we can apply Higman's Lemma. 
Such a reduction can be achieved with the help of Theorem~\ref{linearity_thm}.

\begin{remark}\emph{
Theorem~\ref{linearity_thm} and Theorem~\ref{forbiddenminorchar} answer two questions asked by the author~\cite{L2}. 
In the context of positive braids and the surface minor relation, there is another relevant question, asked by Baader and Dehornoy~\cite{BaDe}: 
are canonical Seifert surfaces of positive braids with the surface minor relation well-quasi-ordered?
While it does not answer the question of Baader and Dehornoy, 
Theorem~\ref{forbiddenminorchar} directly gives the application a positive answer would yield for the genus defect~$g-g_4^\mathrm{top}$ 
of positive braid knots.
}\end{remark}

\begin{remark}\emph{
Our proof of Theorem~\ref{linearity_thm} is slightly stronger as it in fact gives the stated bound for the algebraic genus~$g_\mathrm{alg}$, defined by Feller and Lewark~\cite{FL},
which in turn is an upper bound for the topological~$4$-genus by Freedman's disc theorem~\cite{Free}.
Furthermore, Theorem~\ref{linearity_thm} implies the same bound also for the signature defect~$g-|\sigma|/2$ of prime positive braid knots,
by the bound due to Kauffman and Taylor~\cite{KT}.
Theorem~\ref{forbiddenminorchar} consequently holds for the algebraic genus defect and the signature defect as well.
}\end{remark}

\noindent
\textbf{Organisation}
In Section~\ref{background}, we introduce the necessary background on positive braids, their canonical Seifert surfaces and linking graphs, and the surfaces~$\widetilde T$,~$\widetilde E$ and~$\widetilde X$. 
Section~\ref{path_section} and Section~\ref{minor_section} are devoted to finding surface minors~$\widetilde T$,~$\widetilde E$ or~$\widetilde X$ of the canonical Seifert surfaces of positive braid knots
by considering induced subgraphs of the linking graph.
In Section~\ref{linearityproof_section} and Section~\ref{minortheory_section}, we finally prove Theorem~\ref{linearity_thm} and Theorem~\ref{forbiddenminorchar}, respectively.\\

\subsection*{Acknowledgements.}
I warmly thank Sebastian Baader, Peter Feller and Lukas Lewark for many inspiring discussions on the subject of this article.
I also thank an anonymous referee for their helpful comments, in particular for a comment that has led to a stronger form of Remark~\ref{nolowerbound}.
The author was supported by the Swiss National Science Foundation (grant no.\ 175260).

\section{Positive braids and the linking pattern}
\label{background}

A \emph{positive braid on~$n+1$ strands} is given by a \emph{positive braid word} in~$n$ generators, that is, a word in positive powers of the generators~$\sigma_1,\dots,\sigma_n$.
Usually, a positive braid is defined to be such a word up to braid relations~$\sigma_i\sigma_j=\sigma_j\sigma_i$ for~$\vert i-j\vert\ne1$ and~$\sigma_i\sigma_{i+ 1}\sigma_i=\sigma_{i+ 1}\sigma_i\sigma_{i+ 1}$
for~$1\le i<n$.
As this difference is not crucial for our purposes, we often blur the distinction between positive braids and words representing them.
We usually say ``positive braid" when in fact it would be precise to say ``positive braid word". 
A positive braid~$\beta$ can be represented geometrically by taking~$n+1$ strands and inserting a positive crossing between the~$i$th and~$i+1$st strand for every occurrence of~$\sigma_i$.
A~\emph{positive braid link}~$\widehat\beta$ is the closure of the geometric representation of a positive braid~$\beta$. 
See Figure~\ref{Xtildebraid} for an example of the geometric representation and the closure 
of the positive braid given by~$\beta=\sigma_1^2\sigma_2^2\sigma_1\sigma_3\sigma_2^2\sigma_3$.
\begin{figure}[h]
\begin{center}
\def\svgwidth{220pt}
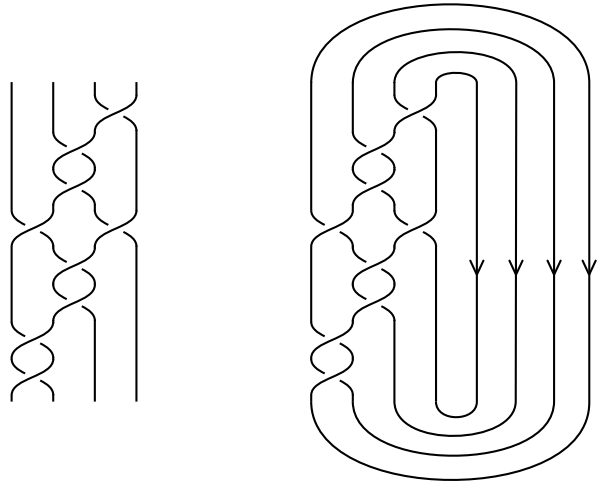
\caption{The positive braid link associated with the word~$\sigma_1^2\sigma_2^2\sigma_1\sigma_3\sigma_2^2\sigma_3$.}
\label{Xtildebraid}
\end{center}
\end{figure}
There is a unique (up to isotopy) genus-minimising Seifert surface~$\Sigma(\beta)=\Sigma(\widehat\beta)$ for each non-split positive braid link~$\widehat\beta$,
by a theorem of Stallings~\cite{St}. We call the surface~$\Sigma(\beta)$ the \emph{canonical Seifert surface} of~$\widehat\beta$. 
It is obtained by taking~$n$ discs and connecting them with a curved handle for every occurrence of~$\sigma_i$. On the left in Figure~\ref{Xtildebrick},
the surface~$\Sigma(\beta)$ is depicted for~$\beta=\sigma_1^2\sigma_2^2\sigma_1\sigma_3\sigma_2^2\sigma_3$.
\begin{figure}[h]
\begin{center}
\def\svgwidth{320pt}
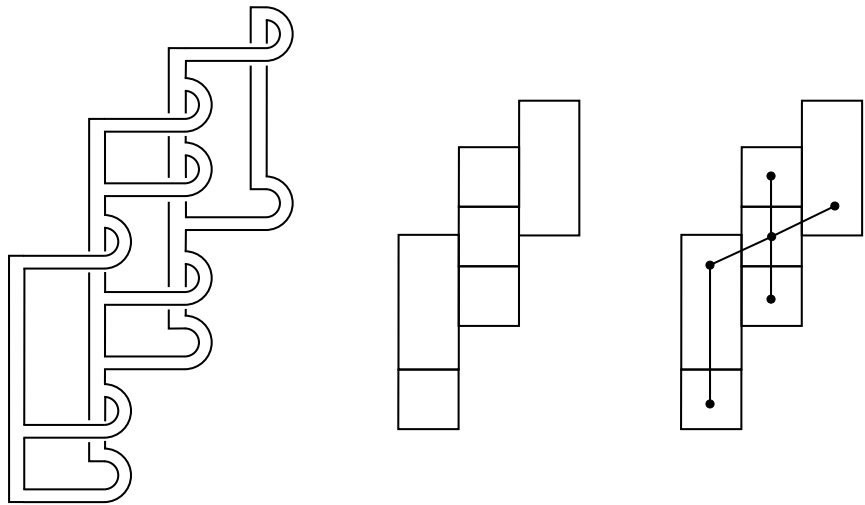
\caption{Retracting the canonical Seifert surface to bricks, and the associated linking pattern.}
\label{Xtildebrick}
\end{center}
\end{figure}
By construction, the canonical Seifert surface retracts to a collection of rectangles in the plane, called the \emph{brick diagram}, as shown in Figure~\ref{Xtildebrick}. 
From this diagram, we construct the \emph{linking pattern}~$\mathcal{P}(\beta)$, a plane graph, by the following rules. There is one vertex for each rectangle. 
Furthermore, two vertices are connected by an edge exactly if the corresponding rectangles share a horizontal side or if two vertical sides overlap partially
(so that the intersection is not equal to one of the sides), see Figure~\ref{Xtildebrick} for an example. 
If there is an edge between two vertices of the linking pattern, we also say that the two corresponding rectangles~\emph{link}.
We note that the linking pattern does depend on the braid word, not only on the positive braid link. 
We also mention that positive braid links are visually prime by a result of Cromwell~\cite{Cro}, 
so a positive braid link is prime exactly if the linking pattern is connected. 

\subsection{The surface minors~$\widetilde T$,~$\widetilde E$ and~$\widetilde X$.}

Let~$\widetilde T$,~$\widetilde E$ and~$\widetilde X$ be the canonical Seifert surfaces
\begin{align*}
\widetilde T &= \Sigma(\sigma_1^5\sigma_2\sigma_1^4\sigma_2),\\
\widetilde E &= \Sigma(\sigma_1^7\sigma_2\sigma_1^3\sigma_2),\\
\widetilde X &= \Sigma(\sigma_1^2\sigma_2^2\sigma_1\sigma_3\sigma_2^2\sigma_3).
\end{align*}
The surface~$\widetilde X$ is depicted in Figure~\ref{Xtildebrick}. Figure~\ref{tildes} in addition shows the brick diagrams and the linking patterns for~$\widetilde T$ and~$\widetilde E$.
\begin{figure}[h]
\begin{center}
\def\svgwidth{170pt}
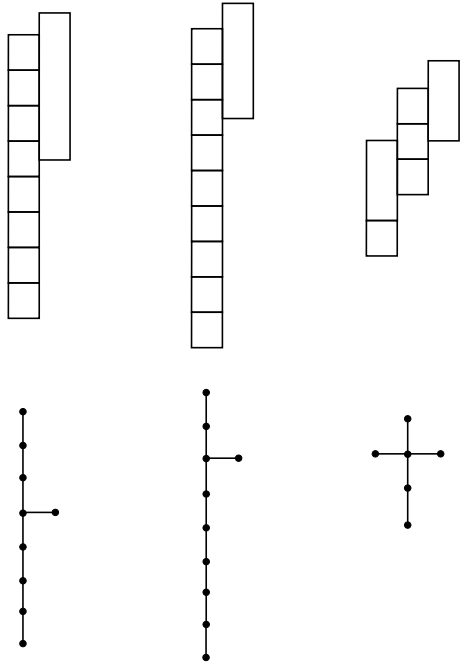
\caption{The brick diagrams and the linking patterns~$\Gamma_{\widetilde T}$,~$\Gamma_{\widetilde E}$ and~$\Gamma_{\widetilde X}$ of~$\widetilde T$,~$\widetilde E$ and~$\widetilde X$, respectively.}
\label{tildes}
\end{center}
\end{figure}
By a result due to the author~\cite{L2}, we have~$g_4^\mathrm{top} = g - 1$
for~$\partial\widetilde T$,~$\partial\widetilde E$ and~$\partial\widetilde X$. 
Since genus defect is inherited from surface minors, we can deduce genus defect of a positive braid knot 
by finding surface minors~$\widetilde T$,~$\widetilde E$ and~$\widetilde X$ in the canonical Seifert surface.
We do this with the help of the following lemma.  
An \emph{induced subgraph} of a graph~$\Gamma$ consists of a subset of the vertices of~$\Gamma$ and all the edges of~$\Gamma$ between those vertices.

\begin{lem}
If the linking pattern~$\mathcal{P}(\beta)$ for a positive braid~$\beta$ contains one of the graphs~$\Gamma_{\widetilde T}$,~$\Gamma_{\widetilde E}$ or~$\Gamma_{\widetilde X}$ as an
induced subgraph, then the canonical Seifert surface~$\Sigma(\beta)$ contains~$\widetilde T$,~$\widetilde E$ or~$\widetilde X$, respectively, as a surface minor. 
\end{lem}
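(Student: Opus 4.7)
My plan is to construct the desired surface minor explicitly from the induced subgraph. Given an induced subgraph $H \subset \mathcal{P}(\beta)$ isomorphic to $\Gamma_{\widetilde T}$ (the cases of $\Gamma_{\widetilde E}$ and $\Gamma_{\widetilde X}$ being entirely analogous), I would select the bricks $R_1,\dots,R_k$ of the brick diagram of $\beta$ corresponding to the vertices of $H$, and form the subsurface $\Sigma' \subset \Sigma(\beta)$ consisting of these bricks together with the handles of $\Sigma(\beta)$ attached along their common horizontal sides or partially overlapping vertical sides. Because $H$ is an \emph{induced} subgraph, the handles of $\Sigma(\beta)$ between the selected bricks are exactly those corresponding to the edges of $H \cong \Gamma_{\widetilde T}$, with no additional ones.

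The first main step is to show that $\Sigma'$ is isotopic in $S^3$ to $\widetilde T$. The guiding principle is that the isotopy type of a surface built from a brick diagram in $S^3$ is determined by the combinatorial linking pattern together with the planar layout of the bricks. Concretely, I would shrink and reposition each selected brick $R_i$ to match the corresponding brick in the standard diagram of $\widetilde T$, dragging the attached handles along. The induced-subgraph assumption ensures that this repositioning does not force any new handles or unwanted crossings between the $R_i$, so the surface produced is the standard brick-diagram surface of $\widetilde T$.

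The second main step is to verify that $\Sigma'$, sitting inside $\Sigma(\beta)$, is incompressible. This follows from the fact that $\Sigma(\beta)$ is the fibre surface of the fibered positive braid link $\widehat\beta$ and is therefore incompressible in $S^3$, combined with the observation that $\Sigma'$ contains whole bricks and all handles of $\Sigma(\beta)$ whose two attaching bricks both lie in $\Sigma'$. Consequently, the free group $\pi_1(\Sigma')$ with generators read off from the brick-and-handle graph of $\Sigma'$ sits inside $\pi_1(\Sigma(\beta))$ as the subgroup generated by the corresponding subgraph, so a loop carried by $\Sigma'$ cannot be null-homotopic in the larger surface unless it was already trivial in $\Sigma'$.

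I expect the main obstacle to lie in the first step: although it is intuitively clear that the linking pattern controls the surface isotopy type in $S^3$, the bricks $R_1,\dots,R_k$ sit inside a potentially much larger brick diagram of $\beta$, and one must show that neither the surrounding bricks nor the particular shapes and positions of the selected bricks can obstruct the desired ambient isotopy. This amounts to establishing that the linking pattern, together with the natural planar embedding of the bricks, is a complete invariant of the isotopy class in $S^3$ of the associated brick-diagram surface — a statement already implicit in the work of Baader, Lewark, and the author on linking graphs, which one would need to either invoke or verify directly for the specific configurations arising from $\widetilde T$, $\widetilde E$ and $\widetilde X$.
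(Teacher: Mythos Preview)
Your proposal follows essentially the same route as the paper, but the paper's execution is cleaner on two points. First, rather than assembling a subsurface directly from ``bricks and handles'' (your description here is imprecise: in $\Sigma(\beta)$ the handles correspond to \emph{letters} of $\beta$, not to edges of the linking pattern), the paper passes to the \emph{subword} $\beta'$ of $\beta$ consisting of exactly those letters that bound the selected bricks. Then $\Sigma(\beta')$ is automatically an incompressible subsurface of $\Sigma(\beta)$ --- this is a standard fact for positive braids --- so your second step becomes unnecessary. The induced-subgraph hypothesis guarantees that $\mathcal{P}(\beta')$ is exactly $\Gamma_{\widetilde T}$, $\Gamma_{\widetilde E}$ or $\Gamma_{\widetilde X}$. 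Second, for the identification of $\Sigma(\beta')$ with $\widetilde T$, $\widetilde E$ or $\widetilde X$, the paper invokes the same Baader--Lewark--Liechti result you point to, but with a sharper observation: in general the \emph{oriented} linking graph determines the link type, and the unoriented linking pattern does not; what makes the argument work here is the specific fact that for these three tree-shaped graphs the orientation carries no extra information, because removing any edge leaves at least one component with a reflection symmetry (Corollary~8 of \cite{BLL}). Your formulation ``the linking pattern together with the planar embedding is a complete invariant'' overstates what is true in general and understates what is special about these three graphs.
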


\begin{proof}
Suppose the linking pattern~$\mathcal{P}(\beta)$ for a positive braid~$\beta$ contains one of the graphs~$\Gamma_{\widetilde T}$,~$\Gamma_{\widetilde E}$ or~$\Gamma_{\widetilde X}$ as an
induced subgraph. Then, the positive braid~$\beta$ contains a subword~$\beta'$ whose linking pattern is~$\Gamma_{\widetilde T}$,~$\Gamma_{\widetilde E}$ or~$\Gamma_{\widetilde X}$, respectively. 
For~$\beta'$, just take all the letters of~$\beta$ which in the brick diagram define the horizontal edges of the rectangles corresponding to the vertices of the induced subgraph.  
Here, it is important that the subgraph~$\Gamma_{\widetilde T}$,~$\Gamma_{\widetilde E}$ or~$\Gamma_{\widetilde X}$ is induced: if it were not, then the above definition of~$\beta'$ would yield
a braid word whose linking pattern might have more edges.  
 
The canonical Seifert surface~$\Sigma(\beta')$ is a surface minor of~$\Sigma(\beta)$. It only remains to show that the canonical Seifert surface~$\Sigma(\beta')$ equals~$\widetilde T$,~$\widetilde E$ or~$\widetilde X$, respectively. 
This follows from the fact that the linking pattern can be enriched with some additional information, encoded by an edge orientation, so that it uniquely determines the positive braid link type. This is a result due to Baader, Lewark and the author~\cite{BLL}, and this enriched graph is called the \emph{linking graph}.
We note that while the linking graph a priori contains more information than the linking pattern (which is the unoriented version of the linking graph), it does not in case the underlying abstract graph is~$\Gamma_{\widetilde T}$,~$\Gamma_{\widetilde E}$ or~$\Gamma_{\widetilde X}$. 
Indeed, removing any edge of one of the graphs~$\Gamma_{\widetilde T}$,~$\Gamma_{\widetilde E}$ or~$\Gamma_{\widetilde X}$ divides the graph into two connected components, at least one of which is symmetric with respect to a reflection in the plane. 
The uniqueness now follows from Corollary~8 in~\cite{BLL}. In particular, we get that~$\widehat\beta'$ has the same link type as the boundary link~$\partial\widetilde T, \partial\widetilde E$ or~$\partial\widetilde X$, respectively, so the canonical Seifert surface~$\Sigma(\beta')$ is isotopic to~$\widetilde T, \widetilde E$ or~$\widetilde X$, respectively. 
\end{proof}

\subsection{The positive braid index}
Recall that the {positive braid index} of a positive braid link is the minimal number of strands necessary to represent the link as the closure of a positive braid.
The following lemma gives a condition under which a positive braid~$\beta$ cannot be of \emph{minimal positive braid index}, that is, does not realise the positive braid index of its closure~$\widehat\beta$.
It is stated also by the author in~\cite{L2}. As the proof there contains a mistake, we give a new proof. 
Let the \emph{subword of a positive braid~$\beta$ induced by a subset~$S$ of generators~$\sigma_i$} be the word obtained from~$\beta$ by deleting all occurrences of generators that are not in~$S$.  

\begin{lem}
\label{minimalitylemma}
Let~$\beta$ be a prime positive braid on at least three strands. 
If for some~$i$, the linking pattern of the subword of~$\beta$ induced by the generators~$\sigma_i$ and~$\sigma_{i+1}$ is a path, then~$\beta$ is not of minimal positive braid index.
\end{lem}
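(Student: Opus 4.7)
The plan is to exploit the path structure of the subword $\beta|_{\sigma_i,\sigma_{i+1}}$ to produce, via Markov moves, a positive braid representative of $\widehat{\beta}$ on one fewer strand.

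First, I would characterise which positive two-generator braid words have a path as their linking pattern. Because consecutive syllables of the subword always correspond to adjacent vertices in the linking pattern (their bricks share a horizontal edge of positive length in the brick diagram), the path condition forces the word-order of the syllables to match the path order and rules out any edge between non-consecutive syllables. Using the rule that same-level bricks are linked precisely when the separating opposite-level syllables are wide enough to overlap their vertical sides, this excludes ``wide'' interior syllables: $\beta|_{\sigma_i,\sigma_{i+1}}$ must take the form $\sigma_{j_0}^{a_0}\sigma_{j_1}\sigma_{j_2}\cdots\sigma_{j_{m-1}}\sigma_{j_m}^{a_m}$, where $(j_0,\ldots,j_m)$ alternates between $i$ and $i+1$ and only the first and last syllables are allowed to have exponent larger than one.

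Next, such an alternating subword admits strong identities in the three-strand braid group generated by $\sigma_i$ and $\sigma_{i+1}$. Iterated use of $\sigma_i\sigma_{i+1}\sigma_i = \sigma_{i+1}\sigma_i\sigma_{i+1}$ allows one to rewrite, for instance, $\sigma_i\sigma_{i+1}\sigma_i\sigma_{i+1}\sigma_i$ as $\sigma_i^2\sigma_{i+1}\sigma_i^2$, reducing the count of one of the two generators. I would use such rewrites to bring $\beta|_{\sigma_i,\sigma_{i+1}}$ into a form in which one of $\sigma_i$ or $\sigma_{i+1}$ appears only once.

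The main obstacle is to transfer this local simplification from the two-generator subword to the full braid $\beta$. Letters of $\beta$ whose indices lie at distance at least two from $\{i,i+1\}$ commute with both $\sigma_i$ and $\sigma_{i+1}$ and can be shifted past freely; the real difficulty is that $\sigma_{i-1}$ and $\sigma_{i+2}$ do not commute with $\sigma_i$ and $\sigma_{i+1}$ respectively, and can obstruct the local move, which is plausibly the gap in the proof in~\cite{L2}. I expect the correct argument to handle this either by a careful case analysis that commutes and re-inserts the obstructing letters as the rewrite is carried out, or by a more geometric argument on the canonical Seifert surface rather than on the braid word itself. Once the simplification succeeds, a single occurrence of one of the generators can be cycled to the boundary of the braid word and removed by a Markov destabilisation, producing a positive braid representative of $\widehat{\beta}$ on one fewer strand, so $\beta$ is not of minimal positive braid index.
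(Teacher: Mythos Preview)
Your normal-form step is looser than what the hypothesis actually gives. A path linking pattern forces not just that interior syllables have exponent~$1$, but that there are at most two of them: already for $\sigma_i^{2}\sigma_{i+1}\sigma_i\sigma_{i+1}\sigma_i\sigma_{i+1}^{2}$ the brick $[4,6]$ in column~$i$ has degree four, so the linking pattern is not a path. In fact, up to cyclic permutation and word reversal, the induced subword must be exactly $\sigma_i^{k}\sigma_{i+1}\sigma_i\sigma_{i+1}^{l}$. This sharper form is what the paper extracts, and it matters for the rest of the argument.

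The genuine gap is the step you flag yourself. Your plan is to apply the relation $\sigma_{i+1}\sigma_i\sigma_{i+1}\to\sigma_i\sigma_{i+1}\sigma_i$ repeatedly to collapse $\sigma_i^{k}\sigma_{i+1}\sigma_i\sigma_{i+1}^{l}$ to $\sigma_i^{k+l}\sigma_{i+1}\sigma_i$ and then destabilise; but in the ambient braid the letters $\sigma_{i+2}$ interleaved among the trailing $\sigma_{i+1}^{l}$ block exactly those relations, and symmetrically for $\sigma_{i-1}$ among the leading $\sigma_i^{k}$. Saying you ``expect a careful case analysis or a more geometric argument'' is not a proof, and in fact no amount of local braid relations will push a $\sigma_{i+2}$ past a $\sigma_{i+1}$, so the algebraic route as stated does not close.

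The paper's fix is different from both options you suggest. It does not try to reduce the number of $\sigma_i$'s or $\sigma_{i+1}$'s at all. Instead it uses only far-commutation to sweep every $\sigma_j$ with $j<i$ to occur before the last $\sigma_i$ (such $\sigma_j$ commute with $\sigma_{i+1}$ and with every $\sigma_{>i+1}$) and every $\sigma_j$ with $j>i+1$ to occur after the first $\sigma_{i+1}$. In the resulting picture one strand meets the rest of the braid in exactly two crossings, both as the under-strand; a single isotopy of the closure slides that arc over the top and produces a positive braid on one fewer strand. So the destabilisation is geometric, performed on the closure, rather than a Markov move obtained after algebraic simplification of the word.
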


\begin{proof}
We can assume the subword of~$\beta$ induced by the generators~$\sigma_i$ and~$\sigma_{i+1}$ to be~$\sigma_i^k\sigma_{i+1}\sigma_i\sigma_{i+1}^l$, 
for some positive numbers~$k$ and~$l$.
This can be achieved by cyclic permutation and possibly reversing the order of the word~$\beta$, operations that do not change the positive braid index.
Similarly, we can assume that all occurrences of generators with index smaller than~$i$ take place before the last occurrence of~$\sigma_i$,
and, likewise, all occurrences of generators with index greater than~$i+1$ take place after the first occurrence of~$\sigma_{i+1}$. 
The situation is schematically depicted in Figure~\ref{nichtminimal} on the left.
\begin{figure}[h]
\begin{center}
\def\svgwidth{270pt}
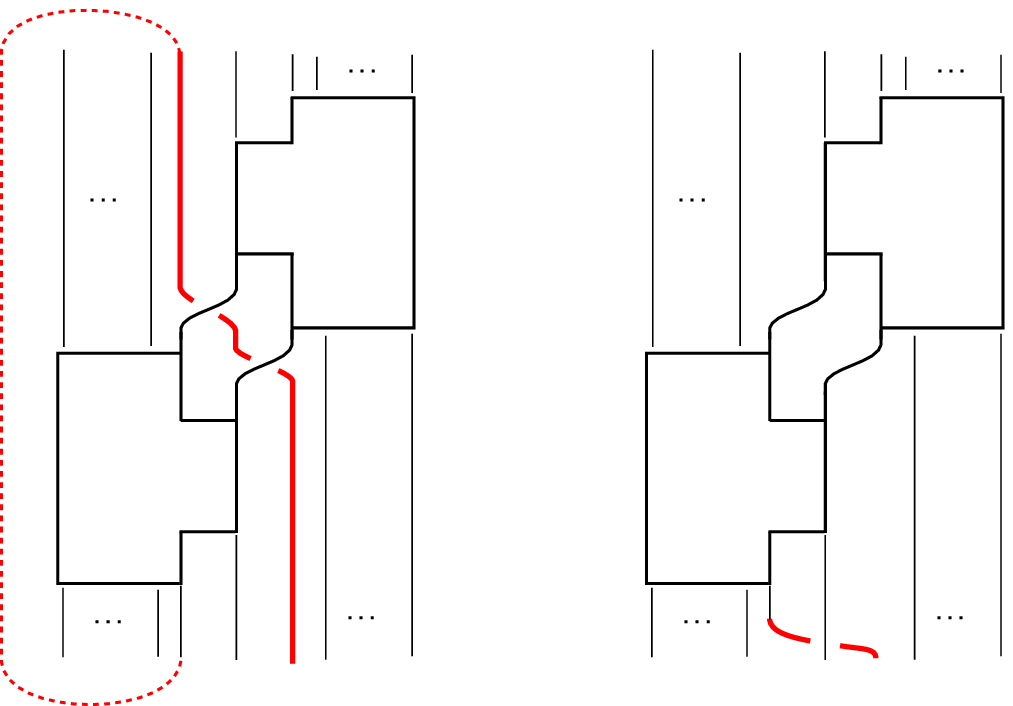
\caption{}
\label{nichtminimal}
\end{center}
\end{figure}
The strand depicted in thick red passes below the two strands it crosses. 
Thus, the closure of~$\beta$ is isotopic to the closure of the braid depicted schematically in Figure~\ref{nichtminimal} on the right.
(In the figure, only the closure of the strand which is moved in the isotopy is shown, in dashed red). 
This braid is still positive but has one strand less than~$\beta$.
\end{proof}

\section{Induced subgraphs of the linking pattern and surface minors}
\label{path_section}

Let~$K$ be a prime knot obtained as the closure of a positive braid~$\beta$ on~$b$ strands. 
We denote by~$\Sigma(\beta)$ and~$\mathcal{P}(\beta)$ the canonical Seifert surface of~$\widehat\beta$ and the linking pattern of~$\beta$, respectively.
For~$I\subset\{1,\dots,b-1\}$, let~$\beta_I$ be the subword of~$\beta$ induced by generators with indices in~$I$.
Furthermore, let~$\Sigma_I(\beta)$ be the surface minor of~$\Sigma(\beta)$ given by the vertical discs and curved handles corresponding to generators with indices in~$I$.
Similarly, we denote by~$\mathcal{P}_I(\beta)$ the subgraph of the linking pattern induced by the vertices corresponding to braid generators with indices in~$I$.
For example, with this notation, Lemma~\ref{minimalitylemma} states that for a positive braid~$\beta$ on~$b\ge3$ strands, 
if~$\mathcal{P}_{\{i,i+1\}}(\beta)$ is a path for some~$1\le i <b-2$, then~$\beta$ is not of minimal positive braid index.
We write~$P_i$ instead of~$P_{\{i\}}$ for index sets with one element.

Our proof method requires us to find the linking patterns corresponding to~$\widetilde T$,~$\widetilde E$ or~$\widetilde X$ 
as induced subgraphs of the linking pattern of prime positive braid knots. 
An important way for us to achieve this is the following. Sometimes, it is possible to find an induced subgraph of the linking pattern with a vertex of degree three. 
In some cases, it is even possible to prolong the arms of this graph (while staying an induced subgraph of the linking pattern) until it is a 
linking pattern corresponding to~$\widetilde T$,~$\widetilde E$ or~$\widetilde X$.
In this context, we make use of the following observation. 

\begin{lem}
\label{inducedpaths_lemma}
Let~$\beta$ be a prime positive braid on~$b$ strands, 
and let~$v$ be a vertex of~$\mathcal{P}_i(\beta)$ for some~$1\le i\le b-1$. 
\begin{enumerate}
\item[(i)] For any natural number~$1\le j \le i-1$,~$\mathcal{P}_{\{i-j,\dots,i\}}$ contains 
an induced path that begins at~$v$ and is of length at least~$j$. 
\item[(ii)] For any natural number~$1\le j\le b-i-1$,~$\mathcal{P}_{\{i,\dots,i+j\}}$ contains
an induced path that begins at~$v$ and is of length at least~$j$. 
\end{enumerate}
\end{lem}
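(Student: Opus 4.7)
The plan is to establish (i) by induction on $j$ and then to obtain (ii) by the same argument applied to the reversed braid word, which preserves primality and produces the horizontal reflection of the linking pattern (exchanging the roles of ``below'' and ``above'' for each row). Fix $v$ in row $i$ and $j$ with $1 \le j \le i-1$, and construct the path one vertex at a time.

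The induction will maintain the stronger invariant that at step $k$ (for $0 \le k \le j$) there is an induced path $v = v_0, v_1, \dots, v_k$ in $\mathcal{P}_{\{i-k,\dots,i\}}(\beta)$ with $v_m$ lying in row $i-m$; the base case $k=0$ is immediate. To streamline the inductive step, I would first observe that an edge of the linking pattern can only connect two bricks lying in adjacent rows. Indeed, bricks whose row indices differ by at least two have disjoint vertical extents and so can neither share a horizontal side nor have partially overlapping vertical sides; bricks in the same row either avoid each other or meet along a full vertical side, the latter case being excluded by the definition of the linking pattern. Consequently, any new vertex $v_{k+1}$ chosen in row $i-k-1$ is automatically non-adjacent in $\mathcal{P}(\beta)$ to each $v_m$ with $m < k$, and the induced-path property is preserved the moment $v_{k+1}$ is chosen to be a neighbor of~$v_k$.

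The crux, and what I expect to be the main obstacle, is producing such a~$v_{k+1}$; this is the only step in which primality really enters. I would isolate and prove the following claim: in the linking pattern of a prime positive braid, every brick in row $r$ with $r\ge 2$ admits at least one neighbor in row $r-1$. The argument I would attempt is by contrapositive, using the combinatorial description of bricks as maximal $\sigma_r$-runs uninterrupted by $\sigma_{r\pm 1}$: if a brick $B$ in row $r$ had no row-$(r-1)$ neighbor, the region of the brick diagram meeting the horizontal extent of $B$ together with its upward-reaching neighbors would split off in the diagram from every brick in row $r-1$, and the adjacency restrictions above would then propagate this separation to a disconnection of the whole of $\mathcal{P}(\beta)$, contradicting primality. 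Granting the claim, the inductive step applies it to $v_k$, which lies in row $i-k \ge 2$ whenever $k \le j-1$, produces a neighbor $v_{k+1}$ in row $i-k-1$, and terminates after $j$ steps with the required induced path of length at least $j$. Part (ii) is then a direct consequence via the reversal symmetry.
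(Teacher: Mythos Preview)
Your argument has a genuine gap stemming from a misreading of the linking pattern. Two bricks in the \emph{same} column that share a horizontal side are, by definition, joined by an edge; so edges do not occur only between adjacent columns. Your ``adjacent-rows-only'' observation, and hence the claim that any new vertex $v_{k+1}$ in column $i-k-1$ is automatically non-adjacent to all earlier $v_m$, is simply false.

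More seriously, your key claim---that in a prime positive braid every brick in column $r\ge 2$ has a neighbour in column $r-1$---fails. Take $\beta=\sigma_1\sigma_2^3\sigma_1\sigma_2\sigma_1$. Its linking pattern is connected (so $\widehat\beta$ is prime), yet the top brick in column~$2$ (the one between the first two occurrences of $\sigma_2$) has no neighbour in column~$1$: its vertical side is entirely contained in that of the large column-$1$ brick, so the overlap is not partial. Your induction therefore cannot proceed past such a brick. The paper's proof handles exactly this situation by allowing the path to move \emph{within} a column (step~(2) of the recipe) until it reaches a brick that does link to the next column; the delicate point, which your outline never confronts, is that one must choose carefully at step~(3) which brick to jump to in the next column so that the subsequent intra-column steps create no back-edges to the previous column. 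That is why the path produced has length \emph{at least} $j$ rather than exactly $j$, and why the algorithm is more intricate than one-column-per-step.
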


\begin{proof} We give a recipe for finding induced paths starting at a given vertex of the linking pattern (thought of as a brick in the brick diagram)
and going in a chosen direction (right or left) in the standard visualisation of the brick diagram. 
\begin{enumerate}
\item Fix your chosen brick~$v$. Depending on whether the brick~$v$ is linked with a brick in the column on the right (left) or not, proceed with~(3) or~(2), respectively.
\item If the brick~$v$ is not linked with a brick in the column on the right (left), add a brick~$w$ to the path. 
Here,~$w$ is the brick either above or below~$v$, depending on which one is closer to a brick in the same column linking with a brick in the column on the right (left). 
Then go back to~(1) with~$v=w$.
\item If the brick~$v$ is linked with at least one brick on the right (left), add a linked brick~$w$ on the right (left) to the path. Here, the brick~$w$ is chosen to be as close as possible to a brick in
its column that is linked with a brick in the column to its right (left). Then go back to~(1) with~$v=w$.
\end{enumerate}
Choosing the brick closest to a linking brick in step~(3) 
ensures that there is no linking with the former column when adding bricks as in step~(2) until again arriving at step~(3).
\begin{figure}[h]
\begin{center}
\def\svgwidth{110pt}
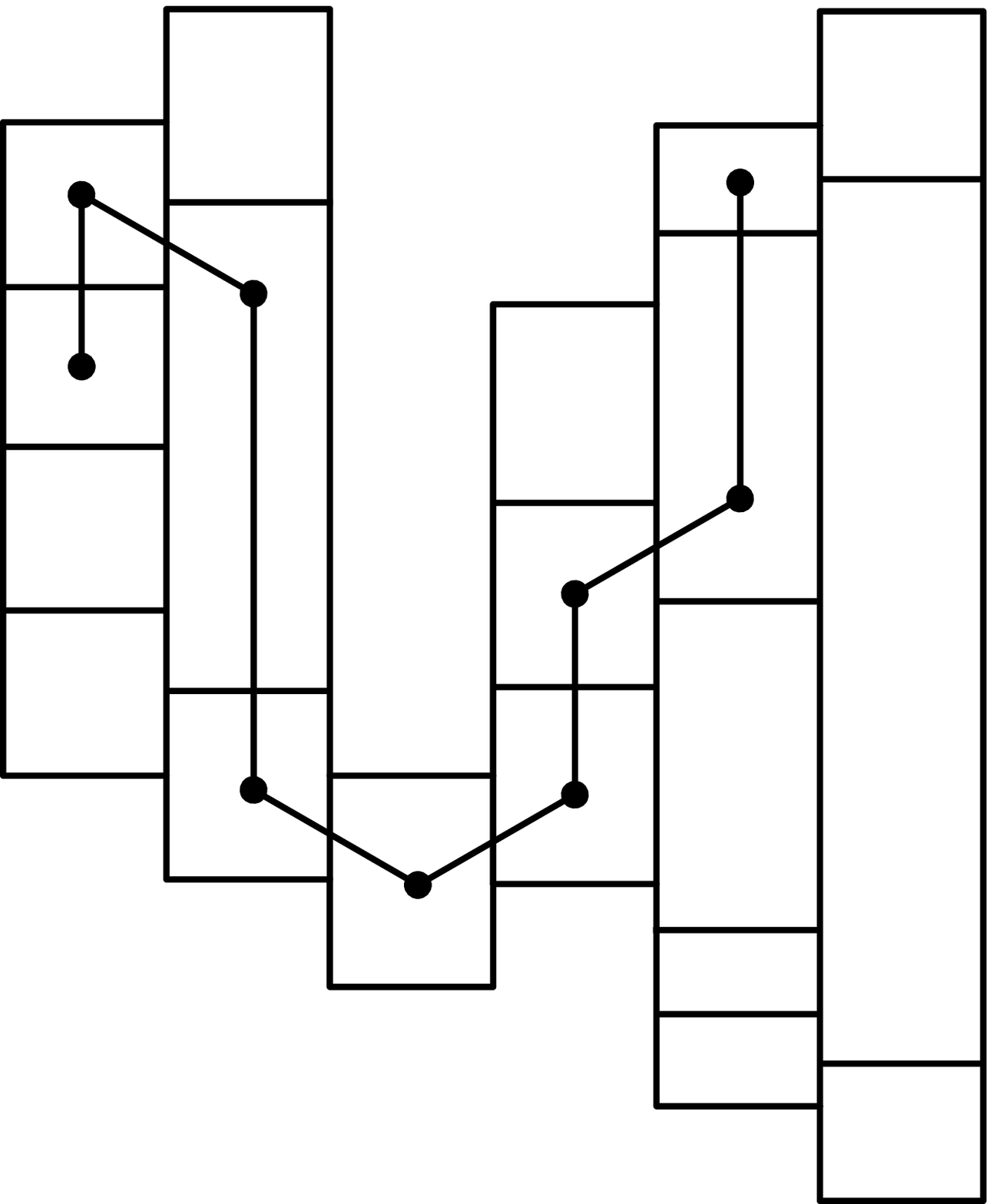
\caption{}
\label{pathexample}
\end{center}
\end{figure}
Figure~\ref{pathexample} illustrates the induced path starting at the endpoint on the left chosen by this recipe for a sample brick diagram. 
\end{proof}

The following lemma shows how we can use Lemma~\ref{inducedpaths_lemma} in order to find surface minors~$\widetilde T$,~$\widetilde E$ or~$\widetilde X$.
We prove several similar statements in Section~\ref{minor_section}.

\begin{lem}
\label{einekante}
Let~$K$ be a prime knot obtained as the closure of a positive braid~$\beta$ of minimal positive braid index~$b$. 
Furthermore, let~$i$ be a natural number such that~$5<i<b-6$. If~$\mathcal{P}_i(\beta)$ and~$\mathcal{P}_{i+1}(\beta)$ are connected by exactly one edge in~$\mathcal{P}(\beta)$,
then~$\Sigma_{\{i-5,\dots,i+6\}}(\beta)$ contains~$\widetilde T$,~$\widetilde E$ or~$\widetilde X$ as a surface minor.
\end{lem}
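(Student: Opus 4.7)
The plan is to combine the minimality hypothesis with the one-edge hypothesis to locate a degree-$3$ vertex in~$\mathcal{P}_{\{i,i+1\}}(\beta)$, and then to extend the three arms at this vertex via Lemma~\ref{inducedpaths_lemma} until one of~$\Gamma_{\widetilde T}$,~$\Gamma_{\widetilde E}$ or~$\Gamma_{\widetilde X}$ appears as an induced subgraph of~$\mathcal{P}_{\{i-5,\ldots,i+6\}}(\beta)$. The lemma proved above then converts this induced subgraph into the desired surface minor of~$\Sigma_{\{i-5,\ldots,i+6\}}(\beta)$.

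To locate the degree-$3$ vertex, I first use the fact that each single-column subgraph~$\mathcal{P}_j(\beta)$ is a path, since bricks in one column of the brick diagram are linked precisely to their vertical neighbours. Writing~$e=(v_i,v_{i+1})$ for the unique edge between~$\mathcal{P}_i(\beta)$ and~$\mathcal{P}_{i+1}(\beta)$, the subgraph~$\mathcal{P}_{\{i,i+1\}}(\beta)$ is therefore a tree formed by two paths joined at~$e$. If both~$v_i$ and~$v_{i+1}$ were endpoints of their respective column paths, this tree would itself be a path, contradicting the minimality of~$\beta$ by Lemma~\ref{minimalitylemma}. Exchanging the roles of~$i$ and~$i+1$ if needed, I may therefore assume that~$v_i$ is internal in~$\mathcal{P}_i(\beta)$, so that its three neighbours in~$\mathcal{P}_{\{i,i+1\}}(\beta)$ are~$v_{i+1}$ together with two column-$i$ bricks~$u_+$ and~$u_-$ lying immediately above and below~$v_i$.

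Because~$i>5$ and~$i<b-6$, part~(i) of Lemma~\ref{inducedpaths_lemma} supplies induced paths of length at least five starting at each of~$u_+$ and~$u_-$ inside~$\mathcal{P}_{\{i-5,\ldots,i\}}(\beta)$, while part~(ii) supplies one starting at~$v_{i+1}$ inside~$\mathcal{P}_{\{i+1,\ldots,i+6\}}(\beta)$. Combined with the Y-shape at~$v_i$, these extensions produce a branching induced configuration inside~$\mathcal{P}_{\{i-5,\ldots,i+6\}}(\beta)$. A case analysis, split according to how the extensions interact with neighbouring columns (whether each extension immediately crosses into the next column or lingers within the current one, and which cross-column edges are consequently introduced), then exhibits~$\Gamma_{\widetilde T}$,~$\Gamma_{\widetilde E}$ or~$\Gamma_{\widetilde X}$ as an induced subgraph of this configuration.

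I expect the main obstacle to be precisely this final case analysis: one must verify that every situation compatible with the setup contains one of the three target graphs as an \emph{induced} subgraph, and in particular that the paths supplied by Lemma~\ref{inducedpaths_lemma} never create stray edges back to~$v_i$,~$u_+$,~$u_-$ or~$v_{i+1}$ which would destroy the induced property. The window~$\{i-5,\ldots,i+6\}$ is chosen so that the arms produced by Lemma~\ref{inducedpaths_lemma} have enough room both to accommodate the longer arms potentially needed by~$\widetilde T$ and~$\widetilde E$ and to realize the three-column spread required by~$\widetilde X$, while its width stays bounded independently of~$\beta$, which is what eventually enables the affine bound in Theorem~\ref{linearity_thm}.
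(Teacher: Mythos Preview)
Your setup is correct up to locating the degree-$3$ vertex~$v_i$ with neighbours~$u_+,u_-$ in column~$i$ and~$v_{i+1}$ in column~$i+1$. The gap is in the next step: you propose to apply Lemma~\ref{inducedpaths_lemma}(i) \emph{twice}, once from~$u_+$ and once from~$u_-$, both heading left into columns~$\{i-5,\dots,i\}$. Lemma~\ref{inducedpaths_lemma} only produces a single induced path from a single starting vertex; it gives you no control over how two such paths, launched from adjacent bricks into the same range of columns, interact with one another. Following the recipe in that lemma's proof, the two paths will typically merge (for instance as soon as both reach a brick in column~$i-1$ linked to the same column-$i-2$ brick), or acquire edges between them, and then your configuration is no longer an induced tree. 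The one-edge hypothesis protects the right-going arm through~$v_{i+1}$ from linking back to column~$i$, but there is no corresponding hypothesis between columns~$i-1$ and~$i$, so nothing shields the two left-going arms from each other. This is not a matter of carrying out the case analysis more carefully; the basic mechanism is not available.

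The paper avoids this by never asking for two disjoint extensions in the same direction. Instead it exploits the explicit form~$\beta_{\{i,i+1\}}=\sigma_i^a\sigma_{i+1}^b\sigma_i^c\sigma_{i+1}^d$ forced by the one-edge hypothesis: minimality via Lemma~\ref{minimalitylemma} gives~$a,c\ge 2$ (say), and then the two short arms of a~$D_5$ are found \emph{inside} columns~$i$ and~$i+1$ themselves, so that only a \emph{single} long arm needs to be grown via Lemma~\ref{inducedpaths_lemma}, in the right-going direction where the one-edge hypothesis guarantees it stays induced relative to column~$i$. When~$a=c=2$ this is not immediately possible, and the paper brings in column~$i-1$ and a braid relation~$\sigma_i\sigma_{i-1}\sigma_i\to\sigma_{i-1}\sigma_i\sigma_{i-1}$ to manufacture the needed~$D_5$; your plan does not anticipate any use of braid relations or of the neighbouring generator~$\sigma_{i-1}$.
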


\begin{proof}
Up to cyclic permutation,~$\beta_{\{i,i+1\}}$ equals~$\sigma_{i}^a\sigma_{i+1}^b\sigma_i^c\sigma_{i+1}^d$ where~$a,b,c,d>0$ are integers (the inequality is strict due to the assumption that the closure is prime).
Either~$a,c\ge2$ or~$b,d\ge~2$. Otherwise,~$\mathcal{P}_{\{i,i+1\}}(\beta)$ is a path and~$\beta$ is not of minimal positive braid index by Lemma~\ref{minimalitylemma}. 
By symmetry, we may assume~$a,c\ge2$.
Assume~$a\ge3$ for a moment. Then,~$\beta$ contains the subword~$\sigma_i^3\sigma_{i+1}\sigma_i^2\sigma_{i+1}$. 
In particular,~$\mathcal{P}_{\{i,i+1\}}(\beta)$ contains the graph~$D_5$ as an induced subgraph, as indicated in Figure~\ref{D5subgraph}. 
\begin{figure}[h]
\begin{center}
\def\svgwidth{30pt}
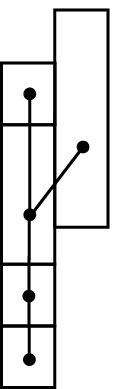
\caption{}
\label{D5subgraph}
\end{center}
\end{figure}
We now apply Lemma~\ref{inducedpaths_lemma}(ii) for~$v$ and~$j=5$. This yields an induced path in~$\mathcal{P}_{\{i,\dots,i+6\}}$ that is of length at least~$5$ and starts at the vertex~$v$. 
Furthermore, no vertex (except for~$v$) of this path is connected to a vertex of~$\mathcal{P}_i(\beta)$ in~$\mathcal{P}(\beta)$. 
This follows from our assumption that~$\mathcal{P}_i(\beta)$ and~$\mathcal{P}_{i+1}(\beta)$ are connected by exactly one edge in~$\mathcal{P}(\beta)$.
As the longest arm of~$\Gamma_{\widetilde E}$ is of length~$6$, 
we obtain that~$\Sigma_{\{i,\dots,i+6\}}(\beta)$ contains a surface minor~$\widetilde E$. 
Therefore, we can now restrict to the case~$a=c=2$. 
By symmetry, if in the beginning we assumed~$b,d\ge2$, using the same strategy we could apply Lemma~\ref{inducedpaths_lemma}(i) 
and find a surface minor~$\widetilde E$ of~$\Sigma_{\{i-5,\dots,i+1\}}(\beta)$.
From now on, we often do not mention this symmetry anymore.

Now, we consider the generator~$\sigma_{i-1}$. Assume that the first occurrence of~$\sigma_{i-1}$ happens before the first occurrence of~$\sigma_i$ in~$\beta$.
There has to be another occurrence of~$\sigma_{i-1}$ before the last occurrence of~$\sigma_i$ in~$\beta$, otherwise~$\mathcal{P}_{\{i-1,i\}}(\beta)$ is disconnected and~$\widehat\beta$ is not prime.
Independently of where this occurrence takes place, we can find a surface minor~$\widetilde E$ or~$\widetilde X$ of~$\Sigma_{\{i-1,\dots,i+6\}}(\beta)$ by adding a path at the vertices~$v$ 
depicted in Figure~\ref{firstoccurrence}, using Lemma~\ref{inducedpaths_lemma}.
\begin{figure}[h]
\begin{center}
\def\svgwidth{200pt}
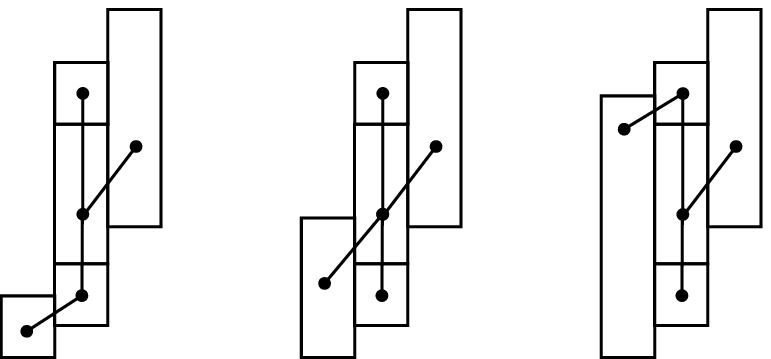
\caption{}
\label{firstoccurrence}
\end{center}
\end{figure}

Now consider Figure~\ref{invariantstrand2}. We have just shown that we may restrict to the case where there are no crossings in the two regions marked with~``X". 
\begin{figure}[h]
\begin{center}
\def\svgwidth{80pt}
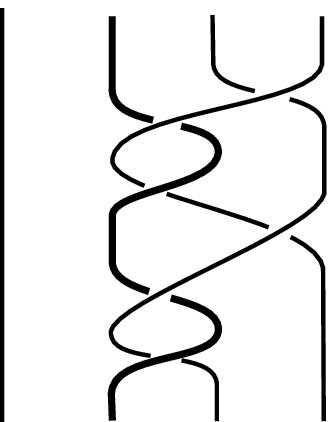
\caption{}
\label{invariantstrand2}
\end{center}
\end{figure}
Hence, there must be at least one crossing in the region marked with~``$\ast$". Otherwise, the~$i$th strand (the thick strand depicted in Figure~\ref{invariantstrand2}) is left invariant by the permutation given by~$\beta$, and~$\widehat\beta$ is not a knot.
Furthermore, in at least one of the two regions marked with~``?", there must be at least one crossing. Otherwise,~$\widehat\beta$ is not prime.
If there is no crossing in the upper of the two regions marked with~``?", we find a surface minor~$\widetilde T$ in~$\Sigma_{\{i-4,\dots,i+4\}}(\beta)$ 
by adding a path at the vertices~$w$ (to the left) and~$v$ (to the right) shown in Figure~\ref{fragezeichenregion}, using Lemma~\ref{inducedpaths_lemma}.
\begin{figure}[h]
\begin{center}
\def\svgwidth{50pt}
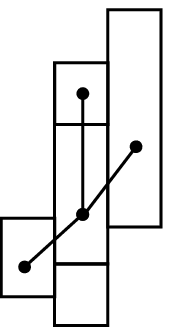
\caption{}
\label{fragezeichenregion}
\end{center}
\end{figure}
Similarly, we find a surface minor~$\widetilde T$ in~$\Sigma_{\{i-4,\dots,i+4\}}(\beta)$ if there is no crossing in the lower of the two regions marked with~``?".

We have shown that we may restrict to the case where, 
up to possibly deleting some generators~$\sigma_{i-1}$,~$\beta_{\{i-1,i,i+1\}}$ 
equals~$\sigma_i\sigma_{i-1}\sigma_i\sigma_{i+1}^b\sigma_{i-1}\sigma_i\sigma_{i-1}\sigma_i\sigma_{i+1}^d$.
Applying two braid relations~$\sigma_i\sigma_{i-1}\sigma_i \to \sigma_{i-1}\sigma_{i}\sigma_{i-1}$ 
yields~$\sigma_{i-1}\sigma_{i}\sigma_{i-1}\sigma_{i+1}^b\sigma_{i-1}^2\sigma_{i}\sigma_{i-1}\sigma_{i+1}^d,$
the linking pattern of which contains~$D_5$ as an induced subgraph, as indicated in Figure~\ref{lastcase}.
Note that by the manipulations we just described, we never change an occurrence of~$\sigma_{i+1}$. 
In particular,~$\Sigma_{\{i-1,\dots,i+6\}}(\beta)$ contains a surface minor~$\widetilde E$ by adding a path starting at the vertex~$v$ in Figure~\ref{lastcase}, using Lemma~\ref{inducedpaths_lemma}.
\begin{figure}[h]
\begin{center}
\def\svgwidth{50pt}
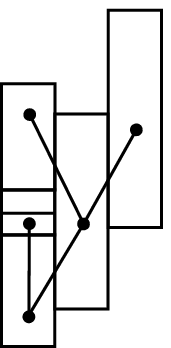
\caption{}
\label{lastcase}
\end{center}
\end{figure}
\end{proof}

The statement of Lemma~\ref{einekante} also holds if~$\mathcal{P}_i(\beta)$ and~$\mathcal{P}_{i+1}(\beta)$ are connected by exactly two edges in~$\mathcal{P}(\beta)$.

\begin{lem}
\label{einekante_lemma}
Let~$K$ be a prime knot obtained as the closure of a positive braid~$\beta$ of minimal positive braid index~$b$. 
Furthermore, let~$i$ be a natural number such that~$5<i<b-6$. If~$\mathcal{P}_i(\beta)$ and~$\mathcal{P}_{i+1}(\beta)$ are connected by exactly two edges in~$\mathcal{P}(\beta)$,
then~$\Sigma_{\{i-5,\dots,i+6\}}(\beta)$ contains~$\widetilde T$,~$\widetilde E$ or~$\widetilde X$ as a surface minor.
\end{lem}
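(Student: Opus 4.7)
The plan is to mirror closely the proof of Lemma~\ref{einekante}, adjusting the combinatorics for the hypothesis of two inter-column edges rather than one. I would begin by placing $\beta_{\{i,i+1\}}$ into a normal form using cyclic permutation and, if necessary, word reversal, neither of which changes the positive braid index or primality. Primality of $\widehat\beta$ forces all block exponents to be positive, and the two-edge condition leaves essentially two possible shapes: a six-block alternating form $\sigma_i^{a_1}\sigma_{i+1}^{b_1}\sigma_i^{a_2}\sigma_{i+1}^{b_2}\sigma_i^{a_3}\sigma_{i+1}^{b_3}$ with all exponents at least $1$, or a four-block form in which one adjacent pair of bricks contributes a double geometric overlap. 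Lemma~\ref{minimalitylemma} rules out the subcase where $\mathcal{P}_{\{i,i+1\}}(\beta)$ is a path, so not all exponents may be equal to $1$.

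Next I would look for $D_5$ as an induced subgraph of $\mathcal{P}_{\{i,i+1\}}(\beta)$ directly, exactly as in the proof of Lemma~\ref{einekante}. If some exponent is at least $3$, or if an appropriate pair of adjacent exponents is at least $2$, then $\beta_{\{i,i+1\}}$ contains a subword of the form $\sigma_i^3\sigma_{i+1}\sigma_i^2\sigma_{i+1}$ (or its mirror), giving $D_5$ together with a distinguished degree-three vertex $v$. Lemma~\ref{inducedpaths_lemma}(ii) applied at $v$ with $j=5$ then extends an arm of this $D_5$ to length $5$ inside $\mathcal{P}_{\{i,\dots,i+6\}}(\beta)$, so that it becomes $\Gamma_{\widetilde E}$ as an induced subgraph. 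The hypothesis that $\mathcal{P}_i(\beta)$ and $\mathcal{P}_{i+1}(\beta)$ are connected by exactly two edges ensures that no extra edges appear between the extending path and $\mathcal{P}_i(\beta)$, so the subgraph really is induced, and we obtain $\widetilde E$ as a surface minor of $\Sigma_{\{i,\dots,i+6\}}(\beta)$. By the symmetry between $\sigma_i$ and $\sigma_{i+1}$, this dispatches every configuration except a small residual list in which all exponents are at most $2$.

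For each residual configuration I would bring in the neighbouring generator $\sigma_{i-1}$, following the closing half of the proof of Lemma~\ref{einekante}. Primality forces $\sigma_{i-1}$ to appear at positions that link $\mathcal{P}_{i-1}(\beta)$ with both the leftmost and rightmost bricks of $\mathcal{P}_i(\beta)$, while the knot condition forces at least one crossing in a central region, so that the $i$th strand is not left invariant under the permutation of $\beta$. Suitable braid moves of the form $\sigma_i\sigma_{i-1}\sigma_i \to \sigma_{i-1}\sigma_i\sigma_{i-1}$ then produce a tripod in the linking pattern of $\beta_{\{i-1,i,i+1\}}$, which extends via Lemma~\ref{inducedpaths_lemma} to $\Gamma_{\widetilde T}$, $\Gamma_{\widetilde E}$, or $\Gamma_{\widetilde X}$ as an induced subgraph of $\mathcal{P}_{\{i-5,\dots,i+6\}}(\beta)$, from which the desired surface minor follows.

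The main obstacle will be the enlarged case analysis: the six-block normal form admits many more exponent patterns than the four-block case of Lemma~\ref{einekante}, and the four-block double-edge variant must be treated separately. I expect, however, that the additional bricks in the richer normal form will in most cases make the proof easier than in Lemma~\ref{einekante}, since $D_5$ will typically appear inside $\mathcal{P}_{\{i,i+1\}}(\beta)$ without the need to invoke $\sigma_{i-1}$ at all.
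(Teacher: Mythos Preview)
Your normal-form analysis is off at the outset. Exactly two edges between $\mathcal{P}_i(\beta)$ and $\mathcal{P}_{i+1}(\beta)$ forces $\beta_{\{i,i+1\}}$ to have \emph{five} alternating blocks, namely $\sigma_i^a\sigma_{i+1}^b\sigma_i^c\sigma_{i+1}^d\sigma_i^e$ or $\sigma_{i+1}^a\sigma_{i}^b\sigma_{i+1}^c\sigma_{i}^d\sigma_{i+1}^e$, not six; a six-block word such as $\sigma_i\sigma_{i+1}\sigma_i\sigma_{i+1}\sigma_i\sigma_{i+1}$ already produces three inter-column edges. The linking pattern is moreover a simple graph, so your ``four-block form with a double geometric overlap'' does not arise.

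More importantly, you are missing the observation that makes this lemma a one-line corollary rather than a fresh case analysis. In the five-block form the first and last blocks use the same generator, so a cyclic permutation of $\beta$ merges them: $\beta_{\{i,i+1\}}$ becomes $\sigma_i^{a+e}\sigma_{i+1}^b\sigma_i^c\sigma_{i+1}^d$ (respectively $\sigma_{i+1}^{a+e}\sigma_{i}^b\sigma_{i+1}^c\sigma_{i}^d$). This is precisely the one-edge configuration of Lemma~\ref{einekante}, and since cyclic permutation preserves primality, minimal positive braid index, and the isotopy class of $\Sigma_{\{i-5,\dots,i+6\}}(\beta)$, that lemma applies verbatim. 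The enlarged case analysis you anticipate, together with the delicate bookkeeping about the extra inter-column edge when extending paths via Lemma~\ref{inducedpaths_lemma}, is entirely avoidable.
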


\begin{proof}
If~$\mathcal{P}_i(\beta)$ and~$\mathcal{P}_{i+1}(\beta)$ are connected by exactly two edges in~$\mathcal{P}(\beta)$,
then either~$\beta_{\{i,i+1\}} = \sigma_i^a\sigma_{i+1}^b\sigma_i^c\sigma_{i+1}^d\sigma_i^e$ or~$\beta_{\{i,i+1\}} = \sigma_{i+1}^a\sigma_{i}^b\sigma_{i+1}^c\sigma_{i}^d\sigma_{i+1}^e$.
In particular, up to cyclic permutation of~$\beta$, $\beta_{\{i,i+1\}} = \sigma_i^{a+e}\sigma_{i+1}^b\sigma_i^c\sigma_{i+1}^d$ 
or~$\beta_{\{i,i+1\}} = \sigma_{i+1}^{a+e}\sigma_{i}^b\sigma_{i+1}^c\sigma_{i}^d$, 
respectively. Hence, Lemma~\ref{einekante} gives the existence of a surface minor~$\widetilde T$,~$\widetilde E$ or~$\widetilde X$ of~$\Sigma_{\{i-5,\dots,i+6\}}(\beta)$. 
\end{proof}

\section{Finding minors~$\widetilde T$,~$\widetilde E$ and~$\widetilde X$}
\label{minor_section}

The goal of this section is to give the means for detecting surface minors~$\widetilde T$,~$\widetilde E$ and~$\widetilde X$ of canonical Seifert surfaces of prime positive braid knots.
We establish a series of lemmas in the spirit of Lemma~\ref{einekante} with changing assumptions on the braid~$\beta$, providing such surface minors. 
These lemmas basically constitute a case distinction which allows us to prove Proposition~\ref{keyprop} in Section~\ref{linearityproof_section}, from which we deduce Theorem~\ref{linearity_thm}.  

\begin{lem}
\label{squaresquaresquare}
Let~$K$ be a prime knot obtained as the closure of a positive braid~$\beta$ of minimal positive braid index~$b$.
Furthermore, let~$i$ be a natural number such that~$6<i<b-6$.
If~$\beta_{\{i,i+1\}}$ ends, up to cyclic permutation, with~$\sigma_{i+1}^{c}\sigma_i^{b}\sigma_{i+1}^{a}$ for~$a,b,c\ge2$,
then~$\Sigma_{\{i-6,\dots,i+6\}}(\beta)$ contains~$\widetilde T, \widetilde E$ or~$\widetilde X$ as a surface minor.
\end{lem}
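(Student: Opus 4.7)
The plan is to mimic the strategy of Lemma~\ref{einekante}: extract a $D_5$-type branching from the induced subgraph of $\mathcal{P}_{\{i,i+1\}}(\beta)$ determined by the three end blocks $\sigma_{i+1}^c, \sigma_i^b, \sigma_{i+1}^a$, then extend its arms into the neighbouring columns via Lemma~\ref{inducedpaths_lemma} until the induced subgraph obtained coincides with one of $\Gamma_{\widetilde T}$, $\Gamma_{\widetilde E}$, $\Gamma_{\widetilde X}$. The unnumbered lemma of Section~\ref{background} relating induced subgraphs of the linking pattern to surface minors will then yield the required surface minor of $\Sigma_{\{i-6,\dots,i+6\}}(\beta)$. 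A useful preliminary observation is that $\beta_{\{i,i+1\}}$ cannot consist of only the three blocks $\sigma_{i+1}^c\sigma_i^b\sigma_{i+1}^a$: cyclically this reduces to the two-block pattern $\sigma_i^b\sigma_{i+1}^{a+c}$, whose linking pattern is a path, incompatible with the minimality of the positive braid index by Lemma~\ref{minimalitylemma}. Hence at least one further $\sigma_i$-block must precede $\sigma_{i+1}^c$ in $\beta_{\{i,i+1\}}$.

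First, I would analyse the induced subgraph of $\mathcal{P}_{\{i,i+1\}}(\beta)$ contributed by the three end blocks together with the last $\sigma_i$-block preceding them. Because $b,c\ge 2$ at the upper corner and $a,b\ge 2$ at the lower corner, each transition is ``thick enough'' to force a degree-three vertex in the induced subgraph, analogously to the $D_5$-subgraph of Figure~\ref{D5subgraph}. From this configuration I would choose a free endpoint $v$, located at one of the tips of the $\sigma_{i+1}^c$- or $\sigma_{i+1}^a$-stack, and, depending on the subcase, a second endpoint $w$ on the opposite side. I would then apply Lemma~\ref{inducedpaths_lemma}(ii) starting at $v$ and, if needed, Lemma~\ref{inducedpaths_lemma}(i) starting at $w$, to prolong induced arms inside $\mathcal{P}_{\{i-6,\dots,i+6\}}(\beta)$; the hypothesis $6<i<b-6$ guarantees that six columns of further generators are available on either side. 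Since the extended arms lie entirely in columns with indices outside $\{i,i+1\}$, they cannot create unwanted edges back to the branching subgraph except possibly at the chosen starting vertex, which preserves the induced property.

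The final step is to recognise, case by case, the resulting induced subgraph as one of $\Gamma_{\widetilde T}$, $\Gamma_{\widetilde E}$, $\Gamma_{\widetilde X}$, at which point the induced-subgraph-to-minor lemma of Section~\ref{background} produces the claimed surface minor. The main obstacle, as in the proof of Lemma~\ref{einekante}, is the accompanying case analysis: one has to separate the cases where the exponents $a,b,c$ are equal to~$2$ or strictly larger, decide which corner (upper, lower, or both) contributes the needed branching, and choose in which direction to extend arms so that their lengths realise those of the arms of $\Gamma_{\widetilde T}$, $\Gamma_{\widetilde E}$, or $\Gamma_{\widetilde X}$. In each subcase, however, the construction is essentially forced by the three end blocks and the path-extension provided by Lemma~\ref{inducedpaths_lemma}.
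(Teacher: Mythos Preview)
Your plan imports the path-extension strategy of Lemma~\ref{einekante}, but that lemma relies crucially on its hypothesis that $\mathcal{P}_i(\beta)$ and $\mathcal{P}_{i+1}(\beta)$ are joined by \emph{exactly one} edge: this is precisely what guarantees that the induced path produced by Lemma~\ref{inducedpaths_lemma} does not acquire unwanted edges back to the chosen $D_5$-branching. In the present lemma there is no such hypothesis, and in fact with $a,b,c\ge 2$ the columns $i$ and $i+1$ are full of bricks. When you step from your endpoint $v$ (in column $i+1$) into column $i+2$, the new brick may well link with \emph{other} bricks of column $i+1$ that belong to your branching; likewise, the recipe of Lemma~\ref{inducedpaths_lemma} may first move within column $i+1$ itself, picking up bricks that link back into column $i$. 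Your sentence ``the extended arms lie entirely in columns with indices outside $\{i,i+1\}$, so they cannot create unwanted edges back'' is therefore not justified, and this is the crux of the argument, not a detail to be filled in.

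The paper's proof takes a different route that exploits the thickness hypothesis $a,b,c\ge 2$ directly rather than via path extension. It first uses Lemmas~\ref{einekante} and~\ref{einekante_lemma} to reduce to the case where $\beta_{\{i,i+1\}}$ contains $\sigma_i^{d}\sigma_{i+1}^{c}\sigma_i^{b}\sigma_{i+1}^{a}$ with $a,b,c,d\ge 2$; if any exponent exceeds $2$, the surface $\widetilde X$ is found already inside $\Sigma_{\{i,i+1\}}(\beta)$, with no path extension at all. The residual case is pinned down to a specific word in $\sigma_i,\sigma_{i+1}$, and then the positions of $\sigma_{i-1}$ are analysed to locate $\widetilde X$ inside $\Sigma_{\{i-1,i,i+1\}}(\beta)$, in some subcases via a braid relation and an edge-contraction trick. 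Lemma~\ref{inducedpaths_lemma} is never invoked. If you want to salvage your approach, you would need an argument controlling the links between column $i+2$ and the several column-$(i+1)$ vertices of your branching, which is exactly the difficulty the paper sidesteps.
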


\begin{proof}
We only have to consider the case where there are at least two additional occurrences of~$\sigma_i$ in~$\beta_{\{i,i+1\}}$. 
Otherwise, there cannot be three or more edges between~$\mathcal{P}_i(\beta)$ and~$\mathcal{P}_{i+1}(\beta)$ in~$\mathcal{P}(\beta)$, 
so we are done by Lemma~\ref{einekante} and Lemma~\ref{einekante_lemma}. 
In particular,~$\beta_{\{i,i+1\}}$ contains~$\sigma_i^d\sigma_{i+1}^{c}\sigma_i^{b}\sigma_{i+1}^{a}$ as a subword, where~$a,b,c,d\ge2$. 
If one out of~$a,b,c$ or~$d$ is strictly greater than~$2$, then~$\Sigma_{i,i+1}(\beta)$ contains~$\widetilde X$ as a surface minor, 
compare with the situation in Figure~\ref{thirdcase} (which is obtained by a cyclic permutation and, if need be, taking a subword).
So, we are left with the case where, up to cyclic permutation, we have~$\beta_{\{i,i+1\}} =  \sigma_i\sigma_{i+1}^e\sigma_i\sigma_{i+1}^{2}\sigma_i^{2}\sigma_{i+1}^{2},$ for~$e\ge1$.

\begin{figure}[h]
\begin{center}
\def\svgwidth{35pt}
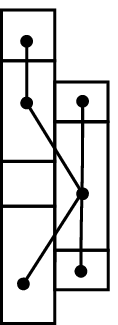
\caption{}
\label{thirdcase}
\end{center}
\end{figure}

Now we consider how the occurrences of the generator~$\sigma_{i-1}$ fit into the fixed subword~$\beta_{\{i,i+1\}} =  \sigma_i\sigma_{i+1}^e\sigma_i\sigma_{i+1}^{2}\sigma_i^{2}\sigma_{i+1}^{2}.$ 
Assume that the first occurrence of~$\sigma_{i-1}$ happens before the first occurrence of~$\sigma_i$ in~$\beta_{\{i-1,i,i+1\}}$.
There has to be another occurrence of~$\sigma_{i-1}$ before the last occurrence of~$\sigma_i$ in~$\beta_{\{i-1,i,i+1\}}$, otherwise~$\mathcal{P}_{\{i-1,i\}}$ is disconnected and~$\widehat\beta$ is not prime.
In each case, we can find a surface minor~$\widetilde X$ of~$\Sigma_{\{i-1,i,i+1\}}$ (by contracting the dotted edge if necessary, compare with Remark~\ref{warumkontrahieren}) as shown in Figure~\ref{firstoccurrence2}.
\begin{figure}[h]
\begin{center}
\def\svgwidth{200pt}
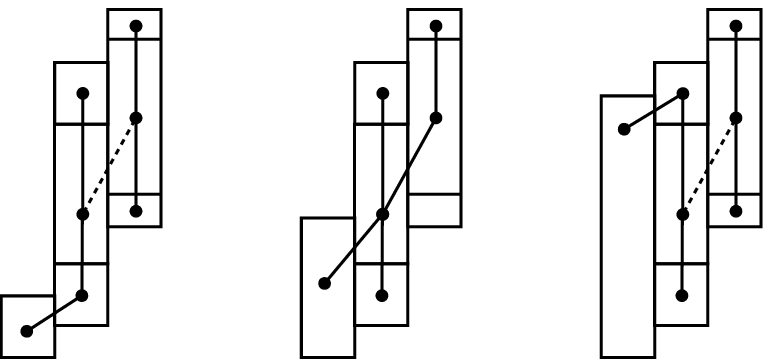
\caption{}
\label{firstoccurrence2}
\end{center}
\end{figure}
By cyclic permutation, an occurrence of~$\sigma_{i-1}$ in~$\beta_{\{i-1,i,i+1\}}$ after the last occurrence of~$\sigma_i$ also yields a surface minor~$\widetilde X$. 
So, let~$\sigma_{i-1}$ occur only after the first occurrence of~$\sigma_i$, but before the last one. Since~$\sigma_i$ occurs (counted with multiplicity) exactly four times in~$\beta_{\{i-1,i,i+1\}}$, 
the only way for~$\mathcal{P}_{i-1}(\beta)$ and~$\mathcal{P}_{i}(\beta)$ to be connected by at least three edges in~$\mathcal{P}(\beta)$ 
is for~$\sigma_{i-1}$ to split every pair of occurrences of~$\sigma_i$ in~$\beta_{\{i-1,i,i+1\}}$.
Otherwise, we are again done by Lemma~\ref{einekante} and Lemma~\ref{einekante_lemma}. 
In this case,~$\beta_{\{i-1,i,i+1\}}$ must contain~$\sigma_i^2\sigma_{i+1}^{2}\sigma_{i-1}\sigma_i\sigma_{i-1}\sigma_i\sigma_{i+1}^{2}$ as a subword. 
Applying the braid relation~$\sigma_i\sigma_{i-1}\sigma_i\to\sigma_{i-1}\sigma_{i}\sigma_{i-1}$ yields the braid word 
$\sigma_i^2\sigma_{i+1}^{2}\sigma_{i-1}^2\sigma_{i}\sigma_{i-1}\sigma_{i+1}^{2}$, whose canonical Seifert surface contains~$\widetilde X$ as a surface minor. 
This can be seen by contracting the dotted edge in Figure~\ref{oh-mann}, compare with Remark~\ref{warumkontrahieren}.
\begin{figure}[h]
\begin{center}
\def\svgwidth{50pt}
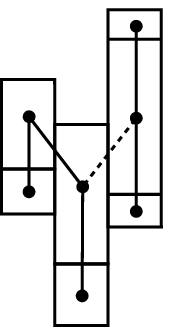
\caption{}
\label{oh-mann}
\end{center}
\end{figure}
\end{proof}

\begin{remark}
\label{warumkontrahieren}
\emph{The linking patterns shown in Figures~\ref{firstoccurrence2} and~\ref{oh-mann} with a dotted edge contain the graph~$\Gamma_{\widetilde X}$ as an induced subgraph, 
but only after contracting the dotted edge. We note that the associated canonical Seifert surfaces also contain the surface~$\widetilde X$ as a surface minor.
\begin{figure}[h]
\begin{center}
\def\svgwidth{110pt}
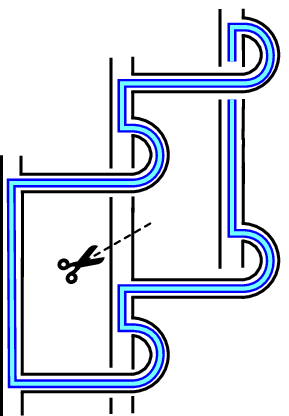
\caption{}
\label{contractionsurface}
\end{center}
\end{figure}
This can be seen by cutting the ribbon of the surface which retracts to the vertical side segment of the brick diagram intersected by the dotted edge, 
as indicated in Figure~\ref{contractionsurface}.
After cutting the ribbon in Figure~\ref{contractionsurface}, instead of the two Hopf bands corresponding to the vertices connected by the dotted edge, 
there is only one Hopf band, which retracts to the Hopf band depicted in blue. 
So, after cutting this vertical ribbon of one of the surfaces corresponding to the intersection patterns with a dotted line in Figure~\ref{firstoccurrence2} or~\ref{oh-mann},
what is left is a plumbing of positive Hopf bands along the tree~$\Gamma_{\widetilde X}$. 
There is only one such surface up to ambient isotopy, namely~$\widetilde X$, see, for example,~\cite{BLL}.
Hence,~$\widetilde X$ is indeed a surface minor. 
}\end{remark}

\begin{lem}
\label{square}
Let~$K$ be a prime knot obtained as the closure of a positive braid~$\beta$ of minimal positive braid index~$b$.
Let~$i$ be a natural number such that~$6<i<b-6$.
Assume furthermore that, up to cyclic permutation,~$\beta_{\{i-1,i,i+1\}}$ ends with~$\sigma_i^2$ and no braid moves 
\begin{align*}
\sigma_i\sigma_{i-1}\sigma_i&\to\sigma_{i-1}\sigma_{i}\sigma_{i-1},\\ 
\sigma_i\sigma_{i+1}\sigma_i&\to\sigma_{i+1}\sigma_{i}\sigma_{i+1}
\end{align*}
can be applied to any cyclic permutation of~$\beta$. 
Then~$\Sigma_{\{i-6,\dots,i+6\}}(\beta)$ contains~$\widetilde T$,~$\widetilde E$ or~$\widetilde X$ as a surface minor.
\end{lem}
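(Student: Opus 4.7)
The plan is to build on the case analysis already begun in Lemmas~\ref{einekante},~\ref{einekante_lemma} and~\ref{squaresquaresquare} by reducing the hypotheses to a short list of normal forms for $\beta_{\{i-1,i,i+1\}}$, each of which already exhibits a branch point of degree three in the linking pattern; I then attach long arms to that branch point via Lemma~\ref{inducedpaths_lemma}.

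First I would invoke Lemmas~\ref{einekante} and~\ref{einekante_lemma} to assume that both of the pairs $(\mathcal{P}_{i-1}(\beta),\mathcal{P}_{i}(\beta))$ and $(\mathcal{P}_{i}(\beta),\mathcal{P}_{i+1}(\beta))$ are connected by at least three edges in $\mathcal{P}(\beta)$; otherwise the desired surface minor is already provided by those lemmas, since $\Sigma_{\{i-5,\dots,i+6\}}(\beta)\subset\Sigma_{\{i-6,\dots,i+6\}}(\beta)$. Together with the hypothesis that $\beta_{\{i-1,i,i+1\}}$ ends in $\sigma_i^2$ and with primality of $\widehat\beta$ (which forces both $\sigma_{i-1}$ and $\sigma_{i+1}$ to link with $\sigma_i$), this constrains the syllabic structure of $\beta_{\{i-1,i,i+1\}}$ just to the left of the trailing $\sigma_i^2$ quite severely.

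Next I would use the no-braid-move hypothesis to control how single $\sigma_{i\pm 1}$'s can interleave the $\sigma_i$'s in $\beta_{\{i-1,i,i+1\}}$: whenever $\sigma_i\sigma_{i\pm 1}\sigma_i$ appears as a factor of the induced word, the corresponding three letters in $\beta$ must be separated by a generator $\sigma_{i\mp 2}$ whose placement blocks all commutations that would make them consecutive, since otherwise the forbidden braid move could be performed on a suitable cyclic permutation. Combined with the multi-edge conditions above, this leaves only a short list of possible shapes for the terminal part of $\beta_{\{i-1,i,i+1\}}$, each exhibiting either (a) a degree-three vertex with two short arms already present in $\mathcal{P}_{\{i-1,i,i+1\}}(\beta)$, or (b) such a vertex together with an extra linked vertex in $\mathcal{P}_{i\pm 2}(\beta)$ coming from a trapping $\sigma_{i\pm 2}$. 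In situations where a small braid-relation rearrangement is profitable (as in the closing step of the proof of Lemma~\ref{einekante}, or via the contraction move justified in Remark~\ref{warumkontrahieren}), I would apply it before reading off the subgraph.

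Finally, for each normal form, I would prolong the arms of the local degree-three subgraph by Lemma~\ref{inducedpaths_lemma}(i) and~(ii) into the columns $\{i-6,\dots,i-2\}$ on the left and $\{i+2,\dots,i+6\}$ on the right, reaching the lengths up to $5$ or $6$ needed to realise $\Gamma_{\widetilde T}$, $\Gamma_{\widetilde E}$ or $\Gamma_{\widetilde X}$ as an induced subgraph of $\mathcal{P}(\beta)$; the first lemma of Section~\ref{background} then supplies $\widetilde T$, $\widetilde E$ or $\widetilde X$ as a surface minor of $\Sigma_{\{i-6,\dots,i+6\}}(\beta)$. The main obstacle I expect is the second step: the no-braid-move assumption is only a statement up to cyclic permutations and commutations, and tracking which occurrences of $\sigma_{i\pm 2}$ genuinely block a potential move versus which can be commuted away requires a careful enumeration. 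A secondary concern is the \emph{induced} character of the subgraph one exhibits: one must verify that the new vertices produced by the path lemma do not create unintended edges, either back to the degree-three vertex or into the trapping columns $i\pm 2$, which is ensured by the ``closest linking brick'' rule in the recipe from the proof of Lemma~\ref{inducedpaths_lemma}.
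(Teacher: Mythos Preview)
Your overall outline matches the paper's approach: reduce via Lemmas~\ref{einekante} and~\ref{einekante_lemma} to the case of at least three connecting edges between adjacent columns, pin down the tail of~$\beta_{\{i-1,i,i+1\}}$ to a few normal forms, and then extend arms with Lemma~\ref{inducedpaths_lemma}. However, your second step contains a misconception that would derail the enumeration. You claim that if~$\sigma_i\sigma_{i\pm 1}\sigma_i$ appears as a factor of the induced word~$\beta_{\{i-1,i,i+1\}}$, then some~$\sigma_{i\mp 2}$ must sit between the three letters in~$\beta$ and block the braid move. But any generator~$\sigma_j$ with~$j\notin\{i-1,i,i+1\}$ satisfies~$|j-i|\ge 2$ and hence commutes with~$\sigma_i$; the two outer~$\sigma_i$'s can therefore always be slid next to the middle~$\sigma_{i\pm1}$, regardless of what lies between. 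The correct conclusion from the hypothesis is simply that the pattern~$\sigma_i\sigma_{i\pm1}\sigma_i$ cannot occur in~$\beta_{\{i-1,i,i+1\}}$ at all. There is no trapping mechanism, and the extra~$\sigma_{i\pm2}$-vertices you introduce in option~(b) are spurious; the paper's entire branch-point analysis stays within columns~$i-1,i,i+1$.

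Beyond this, the ``short list of possible shapes'' is never written down, and this is where the actual work lies. The paper proceeds iteratively: if both~$\sigma_{i-1}$ and~$\sigma_{i+1}$ separate the trailing pair of~$\sigma_i$'s one reads off~$\widetilde T$ directly; otherwise only one (say~$\sigma_{i-1}$) does, necessarily to power~$\ge 2$ by the no-braid-move hypothesis, and then one alternates between invoking Lemma~\ref{squaresquaresquare} (to forbid a further block~$\sigma_i^{\ge 2}$) and tracking the position of the last~$\sigma_{i+1}$, until a~$D_5$-type configuration in~$\mathcal{P}_{\{i-1,i\}}(\beta)$ emerges that yields~$\widetilde E$ after one path extension to the right. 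Your sketch does not supply this alternation, and without it the reduction to finitely many normal forms is not justified.
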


\begin{proof}
We first arrange by cyclic permutation that~$\beta_{\{i-1,i,i+1\}}$ does not end with~$\sigma_i^3$ (but still ends with~$\sigma_i^2$).
Now, in case~$\beta_{\{i-1,i,i+1\}}$ ended with~$\sigma_{i-1}\sigma_{i+1}\sigma_i^2$, 
then using Lemma~\ref{inducedpaths_lemma} one could find a surface minor~$\widetilde T$ 
by adding a path at the vertices~$w$ (to the left) and~$v$ (to the right) indicated in Figure~\ref{doublesplit}.
\begin{figure}[h]
\begin{center}
\def\svgwidth{50pt}
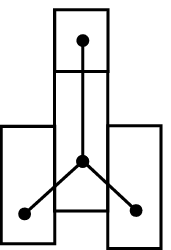
\caption{}
\label{doublesplit}
\end{center}
\end{figure}
So, we can assume without loss of generality (using the symmetry of the situation) that only~$\sigma_{i-1}$ splits the last two occurrences of~$\sigma_i$, that is,
the end~$\sigma_i^3$ of~$\beta_{\{i\}}$ gets split into~$\sigma_i\sigma_{i-1}^a\sigma_i^2$ in~$\beta_{\{i-1,i,i+1\}}$ for some~$a\ge1$.
Actually, this occurrence of~$\sigma_{i-1}$ must be to a power~$a\ge2$, otherwise a braid move~$\sigma_i\sigma_{i-1}\sigma_i\to\sigma_{i-1}\sigma_{i}\sigma_{i-1}$ is possible.
So, we have to consider the case where both~$\beta_{\{i-1,i,i+1\}}$ and~$\beta_{\{i-1,i\}}$ end with~$\sigma_i^b\sigma_{i-1}^a\sigma_i^2$. 
If~$b\ge2$, then we are done by Lemma~\ref{squaresquaresquare}, so we assume~$b=1$ and consider the case where~$\beta_{\{i-1,i\}}$ ends with~$\sigma_{i-1}\sigma_i\sigma_{i-1}^a\sigma_i^2$,
for some~$a\ge2$.
 
We again consider the generator~$\sigma_{i+1}$. If~$\beta_{\{i-1,i,i+1\}}$ ends with~$\sigma_{i+1}\sigma_{i-1}\sigma_i\sigma_{i-1}^a\sigma_i^2$, 
we find a surface minor~$\widetilde E$ by adding a path at the vertex~$v$ shown in Figure~\ref{nocheins}, using Lemma~\ref{inducedpaths_lemma}. 
\begin{figure}[h]
\begin{center}
\def\svgwidth{50pt}
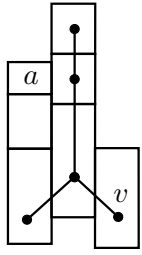
\caption{Here and in other figures later on, a variable~$z$ in a brick without a depicted vertex stands for~$z-2$ additional horizontal segments in the brick.}
\label{nocheins}
\end{center}
\end{figure}
Recall for this that we can assume at least one more occurrence of~$\sigma_{i+1}$ and~$\sigma_i$ earlier in~$\beta$, 
because this is the only way for~$\mathcal{P}_{i+1}(\beta)$ and~$\mathcal{P}_{i}(\beta)$ to be connected by at least three edges 
in~$\mathcal{P}(\beta)$ (otherwise, we are done by Lemma~\ref{einekante} or Lemma~\ref{einekante_lemma}). 
With the same argument, we can also assume another occurrence of~$\sigma_{i-1}$ before the additional occurrence of~$\sigma_i$.
So, we can assume that~$\beta_{\{i-1,i,i+1\}}$ ends with~$\sigma_{i}\sigma_{i-1}^b\sigma_i\sigma_{i-1}^a\sigma_i^2$, where~$a,b\ge2$ , 
since otherwise a braid move~$\sigma_i\sigma_{i-1}\sigma_i\to\sigma_{i-1}\sigma_{i}\sigma_{i-1}$ is possible.

Up to now, we have reduced the proof to the case where the induced subword~$\beta_{\{i-1,i\}}$ ends with 
$\sigma_{i-1}^d\sigma_{i}^c\sigma_{i-1}^b\sigma_i\sigma_{i-1}^a\sigma_i^2$, where~$a,b\ge2$ and~$c,d\ge1$.
Now, we distinguish cases depending on where the last occurrence of~$\sigma_{i+1}$ happens in~$\beta_{\{i-1,i,i+1\}}$.

\emph{Case 1:~$c\ge2$ and the last occurrence of~$\sigma_{i+1}$ splits~$\sigma_i^c$.}
Similarly to what we did in Figure~\ref{nocheins}, we can find a surface minor~$\widetilde E$ by adding a path at the vertex~$v$ shown with Figure~\ref{firstcase}, using Lemma~\ref{inducedpaths_lemma}. 
\begin{figure}[h]
\begin{center}
\def\svgwidth{50pt}
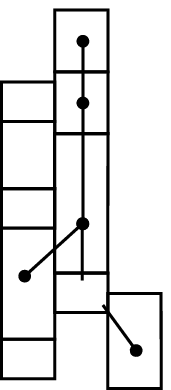
\caption{}
\label{firstcase}
\end{center}
\end{figure}

\emph{Case 2: The last occurrence of~$\sigma_{i+1}$ happens before~$\sigma_i^c$.} 
Again, we can find a surface minor~$\widetilde E$ by adding a path at the vertex~$v$ shown in Figure~\ref{secondcase}, using Lemma~\ref{inducedpaths_lemma}.
\begin{figure}[h]
\begin{center}
\def\svgwidth{50pt}
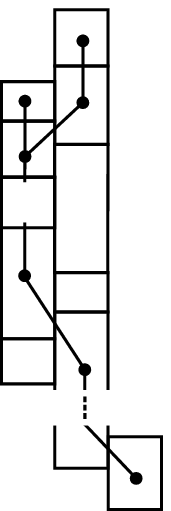
\caption{}
\label{secondcase}
\end{center}
\end{figure}
\end{proof}

\begin{lem}
\label{twooccurrences}
Let~$K$ be a prime knot obtained as the closure of a positive braid~$\beta$ of minimal positive braid index~$b$.
Let~$i$ be a natural number such that~$7<i<b-7$.
Assume furthermore that no braid moves 
\begin{align*}
\sigma_{i-1}\sigma_{i-2}\sigma_{i-1} &\to\sigma_{i-2}\sigma_{i-1}\sigma_{i-2},\\
\sigma_i\sigma_{i-1}\sigma_i &\to\sigma_{i-1}\sigma_{i}\sigma_{i-1},\\ 
\sigma_i\sigma_{i+1}\sigma_i &\to\sigma_{i+1}\sigma_{i}\sigma_{i+1},\\
\sigma_{i+1}\sigma_{i+2}\sigma_{i+1} &\to\sigma_{i+2}\sigma_{i+1}\sigma_{i+2}
\end{align*}
can be applied to any cyclic permutation of~$\beta$. If~$\beta_{\{i,i+1\}}$ has at least two occurrences of~$\sigma_{i+1}$ to a power~$\ge2$,
then~$\Sigma_{\{i-7,\dots,i+7\}}(\beta)$ contains~$\widetilde T$,~$\widetilde E$ or~$\widetilde X$ as a surface minor.
\end{lem}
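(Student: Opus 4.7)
The plan is to follow the pattern established in Lemmas~\ref{einekante} through~\ref{square}: reduce to a narrow structural situation via the previous lemmas, then carry out a small case analysis on the positions of the remaining generators. By Lemmas~\ref{einekante} and~\ref{einekante_lemma}, we may assume that $\mathcal{P}_i(\beta)$ and $\mathcal{P}_{i+1}(\beta)$ are connected by at least three edges in $\mathcal{P}(\beta)$. Applying Lemma~\ref{squaresquaresquare} to every cyclic rotation of $\beta$, we may further assume that no subword of the form $\sigma_{i+1}^{\ge 2}\,\sigma_i^{\ge 2}\,\sigma_{i+1}^{\ge 2}$ occurs cyclically in $\beta_{\{i,i+1\}}$. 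Combined with the two $\sigma_{i+1}^{\ge 2}$-runs provided by the hypothesis, this forces every $\sigma_i$-run sitting directly between two $\sigma_{i+1}^{\ge 2}$-runs to have exponent exactly one. In particular, a brick in column $i$ links with two distinct $\sigma_{i+1}^{\ge 2}$-bricks and already has degree two in the linking pattern.

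To upgrade this degree-two vertex to a degree-three star, I would bring in the generator $\sigma_{i-1}$. The forbidden braid move $\sigma_i\sigma_{i-1}\sigma_i\to\sigma_{i-1}\sigma_i\sigma_{i-1}$ implies that any $\sigma_{i-1}$ splitting two $\sigma_i$'s must appear to a power at least two, and primality of $\widehat\beta$ guarantees the existence of such a splitting occurrence, or of an occurrence bordering the central $\sigma_i$-brick, that links with it in the brick diagram. A short case distinction according to whether the resulting $\sigma_{i-1}^{\ge 2}$ sits to the left of, to the right of, or between the two $\sigma_{i+1}^{\ge 2}$-blocks pins down the shape of the degree-three configuration.

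With the star in place, I would apply Lemma~\ref{inducedpaths_lemma} in its three emanating directions: one arm through columns $i+2,\dots,i+7$, another through $i-2,\dots,i-7$, and a short arm through the remaining $\sigma_{i+1}^{\ge 2}$-brick. The absence of the braid moves $\sigma_{i-1}\sigma_{i-2}\sigma_{i-1}$, $\sigma_i\sigma_{i+1}\sigma_i$, and $\sigma_{i+1}\sigma_{i+2}\sigma_{i+1}$ in any cyclic rotation of $\beta$ prevents unwanted extra edges from appearing between the arms and the star, exactly as in the proof of Lemma~\ref{square}. Comparing arm lengths with those of $\Gamma_{\widetilde T}$, $\Gamma_{\widetilde E}$, and $\Gamma_{\widetilde X}$, one of these graphs appears as an induced subgraph of $\mathcal{P}_{\{i-7,\dots,i+7\}}(\beta)$, which by the induced-subgraph lemma of Section~\ref{background} yields the desired surface minor of $\Sigma_{\{i-7,\dots,i+7\}}(\beta)$.

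The main obstacle, as in the previous lemmas, is the bookkeeping: the relevant $\sigma_{i-1}^{\ge 2}$-occurrences can sit in several positions relative to the two $\sigma_{i+1}^{\ge 2}$-blocks, and each placement produces a slightly different brick picture that has to be handled separately. In each case one must verify both that the central vertex really has degree three and that the arms produced by Lemma~\ref{inducedpaths_lemma} stay induced once recombined with the star. The full strength of the no-braid-move hypothesis is used precisely to rule out the ``short'' configurations in which a cheaper reduction would be possible but would instead be detected by an earlier lemma.
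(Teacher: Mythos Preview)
Your plan diverges from the paper's proof in its choice of auxiliary generator: you propose to move \emph{inward} to~$\sigma_{i-1}$ to create a degree-three vertex, whereas the paper moves \emph{outward} to~$\sigma_{i+2}$. The paper's argument splits on how many of the~$\sigma_{i+1}^{\ge2}$-runs are split by an occurrence of~$\sigma_{i+2}$: if at least two are, the no-braid-move hypothesis forces those~$\sigma_{i+2}$'s to be squares and one reads off~$\widetilde X$ in columns~$i+1,i+2$ directly; if at most one is, the paper factorises~$\beta_{\{i,i+1,i+2\}}$ into blocks~$(\sigma_i^x\sigma_{i+2}^y\sigma_{i+1})$, deletes a carefully chosen subset of the~$\sigma_{i+1}$'s, and arrives at the fixed subword~$\sigma_i^d\sigma_{i+2}^c\sigma_{i+1}\sigma_i^b\sigma_{i+2}^a\sigma_{i+1}^2$, from which~$\widetilde T$ is obtained by extending paths left and right as in Figure~\ref{baldfertig0}. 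Lemma~\ref{squaresquaresquare} is not invoked at all.

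Your sketch has a concrete gap at the structural reduction step. From Lemma~\ref{squaresquaresquare} you conclude only that any~$\sigma_i$-run \emph{directly} between two~$\sigma_{i+1}^{\ge2}$-runs has exponent one; you then assert that ``a brick in column~$i$ links with two distinct~$\sigma_{i+1}^{\ge2}$-bricks''. But nothing so far guarantees that two of the~$\sigma_{i+1}^{\ge2}$-runs are adjacent in~$\beta_{\{i,i+1\}}$: for instance~$\beta_{\{i,i+1\}}=\sigma_{i+1}^2\sigma_i\sigma_{i+1}\sigma_i\sigma_{i+1}^2\sigma_i\sigma_{i+1}\sigma_i$ has two~$\sigma_{i+1}^{\ge2}$-runs, no cyclic subword~$\sigma_{i+1}^{\ge2}\sigma_i^{\ge2}\sigma_{i+1}^{\ge2}$, and~$\mathcal{P}_i$ and~$\mathcal{P}_{i+1}$ are joined by more than two edges, yet no single~$\sigma_i$ sits between two~$\sigma_{i+1}^{\ge2}$-runs. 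Even when such a~$\sigma_i$ does exist, the phrase ``links with two distinct~$\sigma_{i+1}^{\ge2}$-bricks'' is not made precise: a lone~$\sigma_i$ is a horizontal segment, not a brick, and the brick in column~$i$ it bounds may well link with only one brick in column~$i+1$. The subsequent case analysis on positions of~$\sigma_{i-1}^{\ge2}$ and the verification that the arms from Lemma~\ref{inducedpaths_lemma} stay induced are left entirely to the reader, so the proposal as it stands is a plan rather than a proof, and the plan does not cover all cases.
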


By symmetry, Lemma~\ref{twooccurrences} also holds if~$\beta_{\{i-1,i\}}$ has at least two occurrences of~$\sigma_{i-1}$ to a power~$\ge2$.

\begin{proof}
Suppose first that at least two occurrences of~$\sigma_{i+1}$ to a power~$\ge2$ in~$\beta_{\{i,i+1\}}$ get split by occurrences of~$\sigma_{i+2}$.
Then, these occurrences of~$\sigma_{i+2}$ must be to a power~$\ge2$, 
otherwise, a braid move~$\sigma_{i+1}\sigma_{i+2}\sigma_{i+1}\to\sigma_{i+2}\sigma_{i+1}\sigma_{i+2}$ is possible.
In particular,~$\beta_{\{i,i+1,i+2\}}$ contains the subword~$\sigma_{i+1}\sigma_{i+2}^2\sigma_{i+1}^2\sigma_{i+2}^2\sigma_{i+1}$.
For~$\mathcal{P}_{i}(\beta)$ and~$\mathcal{P}_{i+1}(\beta)$ to be connected by at least three edges in~$\mathcal{P}(\beta)$, 
there must be at least one more occurrence of~$\sigma_{i+1}$. 
It follows that, up to cyclic permutation,~$\beta_{\{i+1,i+2\}}$ contains the subword~$\sigma_{i+1}\sigma_{i+2}^2\sigma_{i+1}^2\sigma_{1+2}^2\sigma_{i+1}^2$, and hence, 
$\Sigma_{\{i+1,i+2\}}(\beta)$ contains~$\widetilde X$ as a surface minor, compare with Figure~\ref{thirdcase}.

Now, we suppose at most one occurrence of~$\sigma_{i+1}$ to a power~$\ge2$ in~$\beta_{\{i,i+1\}}$ gets split by an occurrence of~$\sigma_{i+2}$.
We think of~$\beta_{\{i,i+1,i+2\}}$ as a product of factors~$(\sigma_{i}^x\sigma_{i+2}^y\sigma_{i+1})$, where~$x,y\ge0$.
There are at least two factors with~$x=0$, since~$\beta_{\{i,i+1\}}$ has at least two occurrences of~$\sigma_{i+1}$ to a power~$\ge2$.
Furthermore, there is at most one factor with~$x=0$ but~$y\ge1$, since we suppose at most one occurrence of~$\sigma_{i+1}$ to a power~$\ge2$ in~$\beta_{\{i,i+1\}}$ gets split by an occurrence of~$\sigma_{i+2}$.
In particular, there is at least one factor~$(\sigma_{i+1})$.
For~$\mathcal{P}_{i+1}(\beta)$ and~$\mathcal{P}_{i+2}(\beta)$ to be connected by at least three edges in~$\mathcal{P}(\beta)$, 
there must be at least three factors with~$y\ge1$. 
Hence, there must be at least two factors with~$x,y\ge1$. 
We now delete every occurrence of~$\sigma_{i+1}$, except the ones from the factor~$(\sigma_{i+1})$, the one from the factor right in front of the factor~$(\sigma_{i+1})$, 
and one of the factors with~$x,y\ge1$.
We do this so that after this deletion, we obtain, up to cyclic permutation,~$\beta_{\{i,i+1,i+2\}} = \sigma_i^d\sigma_{i+2}^c\sigma_{i+1}\sigma_i^b\sigma_{i+2}^a\sigma_{i+1}^2$, for some~$a,b,c,d\ge1$.
Note that even though we have an occurrence of~$\sigma_{i+1}^2$ in~$\beta_{\{i,i+1,i+2\}}$, 
Lemma~\ref{square} does not apply directly, since the condition on the braid moves might not be satisfied anymore, as we deleted some generators~$\sigma_{i+1}$.
However, we can use the argument at the beginning of its proof and find a surface minor~$\widetilde T$ by adding paths at vertices~$w$ (to the left) and~$v$ (to the right) 
that are indicated in Figure~\ref{baldfertig0}.
\begin{figure}[h]
\begin{center}
\def\svgwidth{50pt}
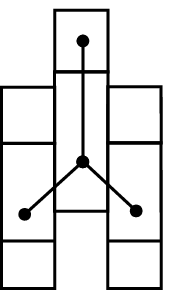
\caption{}
\label{baldfertig0}
\end{center}
\end{figure}
\end{proof}

\section{Linear growth of the genus defect}
\label{linearityproof_section}

We are ready to show that for prime positive braid knots, the genus defect~$g~-~g_4^\mathrm{top}$ grows linearly with the positive braid index. 
The following proposition is all we need to prove Theorem~\ref{linearity_thm}. 

\begin{prop}
\label{keyprop}
Let~$K$ be a prime knot obtained as the closure of a positive braid~$\beta$ of minimal positive braid index~$b$.
Let~$i$ be any natural number such that~$7<i<b-7$.
Then,~$\Sigma_{\{i-7,\dots,i+7\}}(\beta)$ contains~$\widetilde T$,~$\widetilde E$ or~$\widetilde X$ as a surface minor.
\end{prop}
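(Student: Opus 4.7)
The plan is to show that after passing to a suitable equivalent braid word, one of the hypotheses of Lemmas~\ref{einekante}, \ref{einekante_lemma}, \ref{squaresquaresquare}, \ref{square}, or \ref{twooccurrences} is met, so that the corresponding lemma produces the desired surface minor of $\Sigma_{\{i-7,\dots,i+7\}}(\beta)$. The argument is a case analysis on the local structure of $\beta_{\{i,i+1\}}$ and, by symmetry, of $\beta_{\{i-1,i\}}$.

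First I would normalise $\beta$ by applying the braid relations $\sigma_j\sigma_{j\pm 1}\sigma_j\to\sigma_{j\pm 1}\sigma_j\sigma_{j\pm 1}$ with $j\in\{i-1,i,i+1\}$ until none of the forbidden patterns appearing in the hypothesis of Lemma~\ref{twooccurrences} remains in any cyclic permutation of $\beta$. Since these moves only involve generators with indices in $\{i-2,\dots,i+2\}\subset\{i-7,\dots,i+7\}$, the surface $\Sigma_{\{i-7,\dots,i+7\}}(\beta)$ is preserved up to ambient isotopy; termination requires a careful monovariant on the word. Next, I would check the edge counts: if $\mathcal{P}_i(\beta)$ and $\mathcal{P}_{i+1}(\beta)$, or $\mathcal{P}_{i-1}(\beta)$ and $\mathcal{P}_i(\beta)$, are connected by at most two edges, Lemma~\ref{einekante} or Lemma~\ref{einekante_lemma} already produces a surface minor in $\Sigma_{\{i-5,\dots,i+6\}}(\beta)\subseteq\Sigma_{\{i-7,\dots,i+7\}}(\beta)$.

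In the remaining situation, both pairs are connected by at least three edges, so $\beta_{\{i,i+1\}}$ has at least four syllables. The no-moves condition forbids three consecutive power-one syllables, and hence pigeonhole forces at least two syllables of power $\geq 2$. I would then distinguish three subcases: (i) three consecutive syllables of power $\geq 2$, giving Lemma~\ref{squaresquaresquare} after a cyclic permutation; (ii) at least two syllables of the same type ($\sigma_i$ or $\sigma_{i+1}$) have power $\geq 2$, yielding Lemma~\ref{twooccurrences} directly; (iii) exactly one $\sigma_i$-syllable and one $\sigma_{i+1}$-syllable have power $\geq 2$, in which case I would cyclically permute so that $\beta_{\{i-1,i,i+1\}}$ ends with $\sigma_i^2$ and invoke Lemma~\ref{square}.

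The main obstacle I anticipate is the last subcase: a $\sigma_i^{\geq 2}$ syllable of $\beta_{\{i,i+1\}}$ may fail to yield a $\sigma_i^2$ ending of $\beta_{\{i-1,i,i+1\}}$, because the two consecutive $\sigma_i$'s can be interleaved with $\sigma_{i-1}$'s in $\beta$ itself. The no-move condition rules out a single $\sigma_{i-1}$ between them, but a block $\sigma_{i-1}^{\geq 2}$ is allowed, and in that event one instead obtains a $\sigma_{i-1}^{\geq 2}$ syllable in $\beta_{\{i-1,i\}}$. Iterating the analysis on $\beta_{\{i-1,i\}}$ (and reducing to Lemma~\ref{square} at index $i-1$) should close every branch, but this requires a slightly expanded normalisation that also eliminates the patterns $\sigma_{i-1}\sigma_i\sigma_{i-1}$ and $\sigma_{i+1}\sigma_i\sigma_{i+1}$. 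A secondary subtlety is the termination of the normalisation itself, since a naive application of the braid relations can merely exchange one forbidden pattern for another.
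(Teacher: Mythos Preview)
Your high-level architecture matches the paper: normalise $\beta$ so that none of the four moves listed in Lemma~\ref{twooccurrences} applies to any cyclic permutation, then funnel each remaining case into one of the prepared lemmas. The paper's termination monovariant is the one you gesture at: each of the four moves strictly decreases either the total exponent of~$\sigma_i$, or the combined total exponent of~$\sigma_{i-1},\sigma_i,\sigma_{i+1}$.

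There is, however, a genuine gap in your syllable analysis of~$\beta_{\{i,i+1\}}$. The claim that the no-moves condition forbids three consecutive power-one syllables in~$\beta_{\{i,i+1\}}$ is false, because the forbidden patterns are conditions on~$\beta$ (equivalently on~$\beta_{\{i-1,i,i+1\}}$ after commuting distant generators), not on~$\beta_{\{i,i+1\}}$. Concretely, if $\beta_{\{i-1,i,i+1\}}=(\sigma_{i-1}^2\sigma_{i+1}\sigma_i)^3$ then no forbidden move applies (the~$\sigma_{i-1}^2$ block prevents any two~$\sigma_i$'s from becoming adjacent to a single~$\sigma_{i\pm1}$), yet $\beta_{\{i,i+1\}}=(\sigma_{i+1}\sigma_i)^3$ has six power-one syllables and three edges between~$\mathcal P_i$ and~$\mathcal P_{i+1}$. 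So your pigeonhole step fails and the trichotomy (i)/(ii)/(iii) is not exhaustive. Your proposed remedy of adding $\sigma_{i-1}\sigma_i\sigma_{i-1}\to\sigma_i\sigma_{i-1}\sigma_i$ and its mirror to the normalisation cannot work either: these are inverse to moves already on the list, so termination is lost exactly as you fear.

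The paper sidesteps this by working from the outset with~$\beta_{\{i-1,i,i+1\}}$. Once Lemma~\ref{square} has disposed of any~$\sigma_i^{\ge2}$ in~$\beta_{\{i-1,i,i+1\}}$, one writes it as a cyclic product of factors $(\sigma_{i-1}^x\sigma_{i+1}^y\sigma_i)$; at this level the no-moves condition genuinely excludes the bare factors $(\sigma_{i-1}\sigma_i)$ and $(\sigma_{i+1}\sigma_i)$, and Lemma~\ref{twooccurrences} eliminates repeated $x\ge2$ or repeated $y\ge2$. Even so, the lemmas do not close every branch: after all reductions one is left with the residual $\beta_{\{i-1,i,i+1\}}=(\sigma_{i+1}^b\sigma_i)(\sigma_{i-1}\sigma_{i+1}\sigma_i)(\sigma_{i-1}^a\sigma_i)(\sigma_{i-1}\sigma_{i+1}\sigma_i)$ with $a,b\ge2$, and the paper then has to bring in~$\sigma_{i-2}$ and argue directly via Lemma~\ref{inducedpaths_lemma} (using primality and the fact that~$\widehat\beta$ is a knot) to locate a final~$\widetilde E$. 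This last layer is not anticipated in your outline.
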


\begin{proof}[Proof of Theorem~\ref{linearity_thm}]

Let~$\Sigma_j = \Sigma_{\{1+16j,\dots,15 + 16j\}}(\beta)$. By Proposition~\ref{keyprop}, 
$\Sigma_j$  contains~$\widetilde T$,~$\widetilde E$ or~$\widetilde X$ as a surface minor for each integer~$j$ such that~$0\le 16j \le b-16$. 
Since the disjoint union of all surfaces~$\Sigma_j$ is an incompressible subsurface of~$\Sigma(\beta)$, we get that~$\Sigma(\beta)$ 
contains a disjoint union of at least~$\left \lfloor{\frac{b}{16}}\right \rfloor$ copies of~$\widetilde T$,~$\widetilde E$ or~$\widetilde X$ as a surface minor. 
Hence,~$g-g_4^\mathrm{top}\ge \left \lfloor{\frac{b}{16}}\right \rfloor$ holds for~$\widehat\beta$,
since genus defect is inherited from surface minors.
\end{proof}

\begin{proof}[Proof of Proposition~\ref{keyprop}]
We start by repeatedly applying a cyclic permutation followed by one of the braid moves below, until there is no cyclic permutation allowing for a possible braid move 
\begin{align*}
\sigma_{i-1}\sigma_{i-2}\sigma_{i-1} &\to\sigma_{i-2}\sigma_{i-1}\sigma_{i-2},\\
\sigma_i\sigma_{i-1}\sigma_i &\to\sigma_{i-1}\sigma_{i}\sigma_{i-1},\\ 
\sigma_i\sigma_{i+1}\sigma_i &\to\sigma_{i+1}\sigma_{i}\sigma_{i+1},\\
\sigma_{i+1}\sigma_{i+2}\sigma_{i+1} &\to\sigma_{i+2}\sigma_{i+1}\sigma_{i+2}
\end{align*}
anymore. 
This process might not be unique. However, it terminates within a finite number of braid moves, 
since each of these braid moves either reduces the sum of powers of generators~$\sigma_i$ or the sum of powers of generators~$\sigma_{i-1},\sigma_i$ and~$\sigma_{i+1}$.
As this process does not change the canonical Seifert surface~$\Sigma_{\{i-7,\dots,i+7\}}(\beta)$, we may assume that~$\beta$ is the result of such a process,
that is, 
no braid move \begin{align*}
\sigma_{i-1}\sigma_{i-2}\sigma_{i-1} &\to\sigma_{i-2}\sigma_{i-1}\sigma_{i-2},\\
\sigma_i\sigma_{i-1}\sigma_i &\to\sigma_{i-1}\sigma_{i}\sigma_{i-1},\\ 
\sigma_i\sigma_{i+1}\sigma_i &\to\sigma_{i+1}\sigma_{i}\sigma_{i+1},\\
\sigma_{i+1}\sigma_{i+2}\sigma_{i+1} &\to\sigma_{i+2}\sigma_{i+1}\sigma_{i+2}
\end{align*}
can be applied to any cyclic permutation of~$\beta$. 

If there is an occurrence of~$\sigma_i$ to a power~$\ge2$ in~$\beta_{\{i-1,i,i+1\}}$, we are done by Lemma~\ref{square}. 
So we assume this is not the case, and think of~$\beta_{\{i-1,i,i+1\}}$ as a product of factors~$(\sigma_{i-1}^x\sigma_{i+1}^y\sigma_i)$,
where either~$x>0$ or~$y>0$.
If there is more than one occurrence of~$\sigma_{i-1}$ or~$\sigma_{i+1}$ to a power~$\ge2$ in~$\beta_{\{i-1,i,i+1\}}$, 
we are done by Lemma~\ref{twooccurrences}. 
So we may assume there is at most one factor~$(\sigma_{i-1}^x\sigma_{i+1}^y\sigma_i)$ with~$x\ge2$ and at most one such factor with~$y\ge2$.
Furthermore, we may assume~$\mathcal{P}_{i}(\beta)$ and~$\mathcal{P}_{i+1}(\beta)$ to be connected by at least three edges in~$\mathcal{P}(\beta)$, 
and likewise for~$\mathcal{P}_{i-1}(\beta)$ and~$\mathcal{P}_{i}(\beta)$, 
since otherwise, we are done by Lemma~\ref{einekante} and Lemma~\ref{einekante_lemma}. 
It follows that~$\beta_{\{i-1,i,i+1\}}$ consists of at least three factors~$(\sigma_{i-1}^x\sigma_{i+1}^y\sigma_i)$.
Note that factors~$(\sigma_{i-1}\sigma_i)$ and~$(\sigma_{i+1}\sigma_i)$ are ruled out by the braid relations we performed at the beginning of the proof.
Indeed, if a factor~$(\sigma_{i-1}\sigma_i)$ or~$(\sigma_{i+1}\sigma_i)$ appeared in~$\beta_{\{i-1,i,i+1\}}$, 
then so would, up to cyclic permutation, a subword~$\sigma_i\sigma_{i-1}\sigma_i$ or~$\sigma_i\sigma_{i+1}\sigma_i$, respectively.
But this would allow for one of the forbidden braid moves. 
In summary, there is at least one factor~$(\sigma_{i-1}\sigma_{i+1}\sigma_i)$ in~$\beta_{\{i-1,i,i+1\}}$.

In case~$\beta_{\{i-1,i,i+1\}}$ ends, up to cyclic permutation, with~$(\sigma_{i-1}^x\sigma_{i+1}^y\sigma_i)(\sigma_{i-1}\sigma_{i+1}\sigma_i)$, 
for numbers~$x,y\ge1$, we find a surface minor~$\widetilde T$ by adding a path at the vertices~$w$ (to the left) and~$v$ (to the right) shown in Figure~\ref{baldfertig}, 
using Lemma~\ref{inducedpaths_lemma}.
\begin{figure}[h]
\begin{center}
\def\svgwidth{50pt}
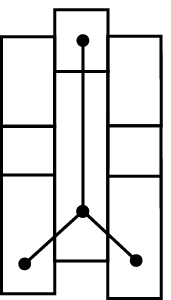
\caption{}
\label{baldfertig}
\end{center}
\end{figure}
Note that in order to obtain the surface minor~$\widetilde T$, we need to add a path to~$w$ and~$v$ not passing through the bricks marked by~$``\ast"$. 
(This is because we are looking for an induced subgraph~$\Gamma_{\widetilde T}$). 
However, if we assume~$\mathcal{P}_{i+1}(\beta)$ and~$\mathcal{P}_{i+2}(\beta)$ to be connected by at least three edges in~$\mathcal{P}(\beta)$, 
this can be achieved for~$v$: the vertex corresponding to the brick marked by~$``\ast"$ has at most two edges connecting to a vertex from~$\mathcal{P}_{i+2}(\beta)$, 
so there must exist at least one other vertex of~$\mathcal{P}_{i+1}$ that is connected by an edge to a vertex of~$\mathcal{P}_{i+2}$.
Similarly, this can be done for~$w$ if~$\mathcal{P}_{i-2}(\beta)$ and~$\mathcal{P}_{i-1}(\beta)$ are connected by at least three edges in~$\mathcal{P}(\beta)$. 
If either~$\mathcal{P}_{i+1}(\beta)$ and~$\mathcal{P}_{i+2}(\beta)$ or~$\mathcal{P}_{i-2}(\beta)$ and~$\mathcal{P}_{i-1}(\beta)$ 
are not connected by at least three edges in~$\mathcal{P}(\beta)$, we are already done by Lemma~\ref{einekante} and Lemma~\ref{einekante_lemma}. 

So far, we have shown that we can assume the factors before and after (in the cyclic order) the factor~$(\sigma_{i-1}\sigma_{i+1}\sigma_i)$ to be~$(\sigma_{i-1}^a\sigma_i)$ and~$(\sigma_{i+1}^b\sigma_i)$, 
respectively, for~$a,b\ge2$.
In this case, we may assume there is at least one other factor~$(\sigma_{i-1}^x\sigma_{i+1}^y\sigma_i)$. 
Otherwise,~$\mathcal{P}_{i}(\beta)$ and~$\mathcal{P}_{i+1}(\beta)$ are connected by only two edges in~$\mathcal{P}(\beta)$, and we are done by Lemma~\ref{einekante_lemma}.
For this factor, only~$x=y=1$ is possible, since we already assumed there is at most one occurrence of~$\sigma_{i-1}$ or~$\sigma_{i+1}$ to a power~$\ge2$ in~$\beta_{\{i-1,i,i+1\}}$. 
In case there was more than one such additional factor, 
~$\beta_{\{i-1,i,i+1\}}$ would contain, up to cyclic permutation, subsequent factors~$(\sigma_{i-1}^x\sigma_{i+1}^y\sigma_i)(\sigma_{i-1}\sigma_{i+1}\sigma_i)$, 
for~$x,y\ge1$, a case we have already dealt with.
Altogether, we may assume that, up to cyclic permutation,~$\beta_{\{i-1,i,i+1\}}$ 
is given by~$(\sigma_{i+1}^b\sigma_i)(\sigma_{i-1}\sigma_{i+1}\sigma_i)(\sigma_{i-1}^a\sigma_i)(\sigma_{i-1}\sigma_{i+1}\sigma_i)$,
for~$a,b\ge2$. The corresponding brick diagram is depicted in Figure~\ref{zweitletztes}.
\begin{figure}[h]
\begin{center}
\def\svgwidth{50pt}
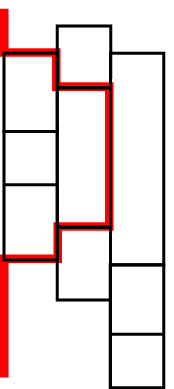
\caption{}
\label{zweitletztes}
\end{center}
\end{figure}

We now consider the generator~$\sigma_{i-2}$. 
We first note that in~$\beta_{\{i-2,i-1,i,i+1\}}$, there must be an occurrence of~$\sigma_{i-2}$ either before the first occurrence of~$\sigma_{i-1}$ or after the last occurrence of~$\sigma_{i-1}$.
Otherwise, the~$i-1$st strand is left invariant by the permutation given by~$\beta$ (as depicted in thick red in Figure~\ref{zweitletztes}), and~$\widehat\beta$ is not a knot. 
Up to cyclic permutation, we can assume there is an occurrence of~$\sigma_{i-2}$ after the last occurrence of~$\sigma_{i-1}$.

There must be other occurrences of~$\sigma_{i-2}$ splitting occurrences of~$\sigma_{i-1}$. 
Otherwise,~$\mathcal{P}(\beta)$ is disconnected and~$\widehat\beta$ is not prime.
We now distinguish cases depending on where occurrences of~$\sigma_{i-2}$ happen.

\emph{Case 1: the occurrence of~$\sigma_{i-1}^a$ in~$\beta_{\{i-1,i,i+1\}}$ is split by~$\sigma_{i-2}$.}
The occurrence of~$\sigma_{i-2}$ is to a power~$\ge2$, otherwise a braid move~$\sigma_{i-1}\sigma_{i-2}\sigma_{i-1}\to\sigma_{i-2}\sigma_{i-1}\sigma_{i-2}$ is possible.
In particular,~$\beta_{\{i-2,i-1\}}$ contains~$\sigma_{i-1}^2\sigma_{i-2}^2\sigma_{i-1}^2\sigma_{i-2}$ as a subword and we can find a surface minor~$\widetilde E$ of~$\Sigma_{\{i-2,\dots,i+4\}}(\beta)$
by adding a path at the vertex~$v$ indicated in Figure~\ref{FERTIG}, using Lemma~\ref{inducedpaths_lemma}. 
\begin{figure}[h]
\begin{center}
\def\svgwidth{60pt}
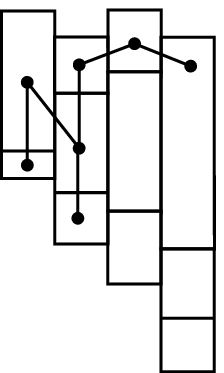
\caption{}
\label{FERTIG}
\end{center}
\end{figure}

\emph{Case 2: the occurrence of~$\sigma_{i-1}^a$ in~$\beta_{\{i-1,i,i+1\}}$ is not split by~$\sigma_{i-2}$.}
In this case, for~$\mathcal{P}_{i-2}(\beta)$ and~$\mathcal{P}_{i-1}(\beta)$ to be connected by at least three edges in~$\mathcal{P}(\beta)$,~$\beta_{\{i-2,i-1\}}$ contains~$\sigma_{i-1}\sigma_{i-2}\sigma_{i-1}^2\sigma_{i-2}\sigma_{i-1}\sigma_{i-2}$ as a subword. Thus, we can find a surface minor~$\widetilde E$ of~$\Sigma_{\{i-2,\dots,i+5\}}(\beta)$
by adding a path at the vertex~$v$ indicated in Figure~\ref{DochNochEins}, using Lemma~\ref{inducedpaths_lemma}.
\begin{figure}[h]
\begin{center}
\def\svgwidth{60pt}
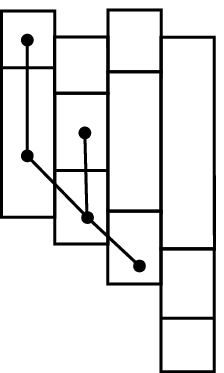
\caption{}
\label{DochNochEins}
\end{center}
\end{figure}
\end{proof}

\begin{remark}
\label{nolowerbound}
\emph{
Theorem~\ref{linearity_thm} does not hold for prime positive braid links. 
In fact, for prime positive braid links, there exists no nontrivial lower bound for the genus defect~$g-g_4^\mathrm{top}$ in terms of the minimal positive braid index.
As examples, we consider the positive braids  $$\beta_k=(\sigma_1\dots\sigma_{k}\sigma_{k}\dots\sigma_1)^2$$
on~$k+1$ strands. The positive braid link~$\widehat\beta_k$ is visually prime and hence prime by a theorem of Cromwell~\cite{Cro}. Furthermore,~$\widehat\beta_k$ is a link with~$k+1$ components. Therefore, the positive braid link~$\widehat\beta_k$ needs at least~$k+1$ strings to be represented as a braid, and hence is of positive braid index~$k+1$.\medskip\newline
\textit{Claim:}~$\vert\sigma(\widehat\beta_k)\vert=2k+1$ and~$\mathrm{null}(\widehat\beta_k)=k-1$. \medskip\newline
Assuming the claim for just a moment, we have~$b_1(\widehat\beta_k) = \vert\sigma(\widehat\beta_k)\vert + \mathrm{null}(\widehat\beta_k)$, so~$\widehat\beta_k$ is of maximal topological 4-genus, that is,~$g=g_4^\mathrm{top}$, by the lower bound of Kauffman and Taylor~\cite{KT}. 
\medskip\newline
In order to prove the claim, we use the canonical Seifert surface for positive braid links.
Let~$S_k$ be the symmetrised Seifert form for~$\Sigma(\beta_k)$. By definition,~$\sigma(\widehat\beta_k)$ is the signature of~$S_k$, and 
~$\mathrm{null}(\widehat\beta_k)$ is the nullity of~$S_k$.
For the calculation, let~$e_1, \dots, e_{3k}$ be the basis of the first homology induced by the bricks of the brick diagram, 
where all curves are oriented anticlockwise and numbered as indicated in Figure~\ref{linkex} for~$k=4$. 
\begin{figure}[h]
\begin{center}
\def\svgwidth{90pt}
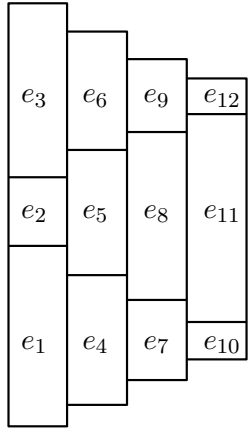
\caption{The brick diagram for the braid~$\beta_4$.}
\label{linkex}
\end{center}
\end{figure}
Inspecting the canonical Seifert surface yields the following description of~$S_k$:
\begin{align*}
S_k(e_i,e_i)&= -2 \ \mathrm{for\ all}\ i,\\
S_k(e_{i\pm1}, e_i)=S_k(e_i,e_{i\pm1}) &= 1,\ \mathrm{for}\ i\equiv2\ (\mathrm{mod}\ 3),\\
S_k(e_{i-2},e_i)=S_k(e_i,e_{i-2}) &= -1,\ \mathrm{for}\ 2\ne i\equiv2\ (\mathrm{mod}\ 3),\\
S_k(e_{i-4}, e_i)=S_k(e_i,e_{i-4}) &= 1, \ \mathrm{for}\ 2\ne i\equiv2\ (\mathrm{mod}\ 3),\\
S_k(e_i,e_j) &=0,\ \mathrm{otherwise}. 
\end{align*}
Let~$N_k$ be the subspace of the first homology of~$\Sigma(\widehat\beta_k)$ generated by~$e_2$ and all~$e_i$ so that~$i\not\equiv 2\ (\mathrm{mod}\ 3)$.
We see that~$S_k$ restricted to~$N_k$ is negative definite. 
Furthermore, let~$O_k$ be the subspace of the first homology of~$\Sigma(\widehat\beta_k)$ generated by all~$x_i$, where we define~$x_i = e_i + \frac{1}{2}(e_{i-1}+e_{i+1}+e_{i-4}-e_{i-2})$, for~$2\ne i\equiv 2\ (\mathrm{mod}\ 3).$ One can compute~$S_k(x_i,x_j)=0$ for all~$2\ne i\equiv 2\ (\mathrm{mod}\ 3)$ and~$2\ne j\equiv 2\ (\mathrm{mod}\ 3)$, so~$S_k$ restricted to~$O_k$ is trivial. 
Furthermore,~$S_k(x_i, e_j)=0$ for all natural numbers~$i$ such that~$2\ne i\equiv 2\ (\mathrm{mod}\ 3)$ and~$j$ either
equal to~$2$ or~$\not\equiv 2\ (\mathrm{mod}\ 3).$ We obtain that the nullity of~$S_k$ equals the dimension of~$O_k$, which is~$k-1$. Furthermore, the absolute value of the signature of~$S_k$ equals the dimension of~$N_k$, which is~$2k+1$. This proves the claim.
}\end{remark}

\section{Surface minor theory for the genus defect}
\label{minortheory_section}
In this section, we deduce the surface minor theoretic applications of Theorem~\ref{linearity_thm}.
More precisely, we show that among prime positive braid knots, having at most a certain genus defect~$g-g_4^\mathrm{top}$ can be characterised by finitely many forbidden surface minors 
of the canonical Seifert surface.

\begin{lem}
\label{16c+32}
Let~$K$ be a prime knot obtained as the closure of a positive braid~$\beta$ of minimal positive braid index~$b$. 
Then~$g-g_4^\mathrm{top}\le c$ holds for~$K=\widehat\beta$ if and only if it holds for~$\widehat\beta_{\{1,\dots, 16c+15\}}$,
where we regard~$\beta_{\{1,\dots, 16c+15\}}$ as a braid on~$\mathrm{min}(b,16c+16)$ strands.
\end{lem}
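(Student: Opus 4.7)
\medskip
\noindent\textbf{Proof plan.} The plan is to split into cases based on whether $b \le 16c+16$ or $b \ge 16c+17$. If $b \le 16c+16$, then every generator $\sigma_i$ occurring in $\beta$ has $i \le b-1 \le 16c+15$, so $\beta_{\{1,\dots,16c+15\}}$ coincides with $\beta$, and it is viewed on $b = \min(b, 16c+16)$ strands. The two closures are then identical knots and the biconditional is trivial.

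In the remaining case $b \ge 16c+17$, I intend to show that \emph{both} $\widehat\beta$ and $\widehat\beta_{\{1,\dots,16c+15\}}$ have genus defect at least $c+1$, so that both sides of the claimed biconditional are false and the equivalence holds vacuously. For $\widehat\beta$ itself this is immediate from Theorem~\ref{linearity_thm}, which gives $g-g_4^{\mathrm{top}} \ge \lfloor b/16\rfloor \ge \lfloor(16c+17)/16\rfloor = c+1$. For $\widehat\beta_{\{1,\dots,16c+15\}}$, I essentially re-run the proof of Theorem~\ref{linearity_thm} but restricted to the first $16c+15$ generators. Concretely, applying Proposition~\ref{keyprop} to the prime positive braid knot $\widehat\beta$ with $i = 8+16j$ for $j=0,1,\dots,c$ (the required inequalities $7 < i < b-7$ hold since $i \ge 8$ and $i = 8+16c < b-7$ whenever $b \ge 16c+17$), one obtains, for each such $j$, a surface minor $\widetilde T$, $\widetilde E$, or $\widetilde X$ inside $\Sigma_{\{1+16j,\dots,15+16j\}}(\beta)$. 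These $c+1$ subsurfaces are pairwise disjoint and use only generators of index at most $15+16c = 16c+15$, hence sit inside $\Sigma(\beta_{\{1,\dots,16c+15\}})$. By Stallings' theorem, $\Sigma(\beta_{\{1,\dots,16c+15\}})$ is a genus-minimising Seifert surface for its boundary, so the surface-minor inheritance of $g-g_4^{\mathrm{top}}\ge c+1$, together with additivity of $g$ and $g_4^{\mathrm{top}}$ over split components, yields the desired defect bound for $\widehat\beta_{\{1,\dots,16c+15\}}$.

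The main obstacle is really only bookkeeping: one needs to observe that the disjoint collection of surface minors produced in the proof of Theorem~\ref{linearity_thm} already fits inside the first $16c+15$ columns of the brick diagram, and that the possibility of $\beta_{\{1,\dots,16c+15\}}$ failing to be prime or connected does not interfere with the inheritance argument since Stallings' theorem and additivity of genus defect apply componentwise. Once both defects are shown to exceed $c$ in this case, both sides of the biconditional are false, and the equivalence follows.
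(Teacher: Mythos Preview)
Your proof is correct and follows essentially the same approach as the paper's own proof. The paper handles the case $b>16c+16$ by invoking Proposition~\ref{keyprop} directly for both $\Sigma(\beta)$ and $\Sigma(\beta_{\{1,\dots,16c+15\}})$, whereas you cite Theorem~\ref{linearity_thm} for $\widehat\beta$ and unpack the proof of Theorem~\ref{linearity_thm} for $\widehat\beta_{\{1,\dots,16c+15\}}$; this is the same argument with slightly different packaging, and your extra care about splitness is harmless (in fact unnecessary, since the linking pattern restricted to a contiguous block of indices remains connected when $\beta$ is prime).
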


\begin{proof}
If~$b\le 16c+16$, then~$\beta = \beta_{\{1,\dots, 16c+15\}}$ and the statement is obviously true. 
Now let~$b>16c+16$. By Proposition~\ref{keyprop}, both~$\Sigma(\beta)$ and~$\Sigma(\beta_{\{1,\dots, 16c+15\}})$ 
contain a disjoint union of at least~$c+1$ copies of~$\widetilde T$,~$\widetilde E$ or~$\widetilde X$ as a surface minor.
Hence,~$g-g_4^\mathrm{top} > c$ holds for both~$K$ and~$\widehat\beta_{\{1,\dots, 16c+15\}}$.
\end{proof}

\begin{proof}[Proof of Theorem~\ref{forbiddenminorchar}]
We show that among positive braids of minimal positive braid index whose closure is a prime knot,~$g-g_4^\mathrm{top} \le c$ can be characterised by finitely many forbidden subwords for any~$c\ge0$.
This implies the result on the level of surface minors of canonical Seifert surfaces, since every prime positive braid knot can be written as the closure of a positive braid of minimal positive braid index,
while the associated canonical Seifert surface does not change its isotopy type.
Furthermore, the forbidden surface minors are simply given by the canonical Seifert surfaces (described in Section~\ref{background}) associated with the forbidden subwords. 
For this to make sense, recall that if~$\beta'$ is a subword of a positive braid~$\beta$, then~$\Sigma(\beta')$ is a surface minor of~$\Sigma(\beta)$.

Consider the collection~$P_n$ of positive braid words on~$n+1$ strands whose closures are prime links. 
By Higman's Lemma, the words in a finite alphabet are well-quasi-ordered by the subword partial order~\cite{Higman}. 
In particular, also~$P_n$ is well-quasi-ordered by the subword partial order. 
Since subwords induce surface minors, genus defect~$g-g_4^\mathrm{top} > c$ of the closure is inherited from subwords in~$P_n$.
Equivalently, the property that the positive braid closure has~$g-g_4^\mathrm{top} \le c$ is passed on to subwords in~$P_n$. 
In particular, the property to have genus defect~$g-g_4^\mathrm{top} \le c$ is characterised by finitely many forbidden subwords for~$P_n$. 
Here, we use that a property that is passed on to minors with respect to some well-quasi-order is characterised by finitely many forbidden minors. 

Now, let~$K$ be a prime knot obtained as the closure of a positive braid~$\beta$ of minimal positive braid index~$b$, where~$b$ can be arbitrarily large.
We argue that if~$g-g_4^\mathrm{top} > c$ holds for~$K$, then~$\beta$ must contain one of the finitely many forbidden subwords characterising~$g-g_4^\mathrm{top} \le c$ for~$P_{\mathrm{min}(b-1,16c+15)}$. 
To see this, note that a genus defect~$g-g_4^\mathrm{top} > c$ for~$K$ implies~$g-g_4^\mathrm{top} > c$ for~$\widehat\beta_{\{1,\dots, 16c+15\}}$ by Lemma~\ref{16c+32}. 
In particular, we have that~$\beta_{\{1,\dots, 16c+15\}}$ contains one of the forbidden subwords characterising genus defect~$g-g_4^\mathrm{top} \le c$ for~$P_{\mathrm{min}(b-1,16c+15)}$,
and hence so does~$\beta$.
It follows that the finitely many forbidden subwords characterising the property~$g-g_4^\mathrm{top} \le c$ for braids in~$P_1,P_2,\dots,P_{16c+15}$ suffice to characterise the property~$g-g_4^\mathrm{top} \le c$ among all positive braids
of minimal positive braid index 
whose closure is a prime knot. 
\end{proof}

\smallskip

\end{document}